\newtheorem{theorem}{Theorem}
\newtheorem{lemma}[theorem]{Lemma}
\newtheorem{proposition}[theorem]{Proposition}
\newtheorem{corollary}[theorem]{Corollary}
\newtheorem{definition}[theorem]{Definition}
\newtheorem{conjecture}[theorem]{Conjecture}
\theoremstyle{remark}
\newtheorem{remarks}[theorem]{Remarks}
\newtheorem{remark}[theorem]{Remark}
\newtheorem{examples}[theorem]{Examples}
\numberwithin{theorem}{section}
\numberwithin{equation}{section}
\newcommand{\A}{\mathbb{A}}
\newcommand{\C}{\mathbb{C}}
\newcommand{\R}{\mathbb{R}}
\newcommand{\Q}{\mathbb{Q}}
\newcommand{\Z}{\mathbb{Z}}
\newcommand{\LL}{\mathscr{L}}
\newcommand{\Sel}{\mathcal{S}}
\newcommand{\Frob}{\mathrm{Frob}}
\DeclareMathOperator{\ord}{ord}
\DeclareMathOperator{\GL}{GL}
\DeclareMathOperator{\supp}{supp}
\DeclareMathOperator{\Res}{Res}
\DeclareMathOperator{\Gal}{Gal}
\begin{document}
\title{$L$-functions as distributions}
\author{Andrew R.~Booker}
\address{School of Mathematics, University of Bristol,
University Walk, Bristol, BS8 1TW, United Kingdom}
\email{andrew.booker@bristol.ac.uk}
\thanks{The author was supported by EPSRC Fellowship EP/H005188/1.}
\begin{abstract}
We define an axiomatic class of $L$-functions extending the Selberg class.
We show in particular that one can recast the traditional conditions
of an Euler product, analytic continuation and functional equation in
terms of distributional identities akin to Weil's explicit formula. The
generality of our approach enables some new applications; for instance,
we show that the $L$-function of any cuspidal automorphic representation
of $\GL_3(\A_\Q)$ has infinitely many zeros of odd order.
\end{abstract}
\maketitle
\section{Introduction}
In \cite{selberg}, Selberg introduced his eponymous class of
$L$-functions, defined as follows.
\begin{definition}\label{def:selbergclass}
The \emph{Selberg class} $\Sel$ is the set of functions
$F$ satisfying the following axioms:
\begin{enumerate}
\item (Dirichlet series). There are numbers $a(n)\in\C$ such that
$F(s)=\sum_{n=1}^\infty a(n)n^{-s}$, converging absolutely for
$\Re(s)>1$.
\item (Analytic continuation). There is an integer $m\ge0$ such that
$(s-1)^mF(s)$ continues to an entire function of finite order.
\item (Functional equation).
There exist $k\in\Z_{\ge0}$, $Q,\lambda_1,\ldots,\lambda_k\in\R_{>0}$,
$\mu_1,\ldots,\mu_k\in\C$ with $\Re(\mu_j)\ge0$ and
$\epsilon\in\C$ with $|\epsilon|=1$ such that the function
$$
\Phi(s)=\epsilon Q^s\prod_{j=1}^r\Gamma(\lambda_j{s}+\mu_j)\cdot F(s)
$$
satisfies the functional equation
$$
\Phi(s)=\overline{\Phi(1-\bar{s})}.
$$
\item (Ramanujan hypothesis).
For every $\varepsilon>0$, $a(n)\ll_\varepsilon n^\varepsilon$.
\item (Euler product). $a(1)=1$ and
$\log{F}(s)=\sum_{n=2}^\infty b(n)n^{-s}$,
where $b(n)$ is supported on prime powers, and
$b(n)\ll n^\theta$ for some $\theta<\frac12$.
\end{enumerate}
\end{definition}

Selberg went on to pose various conjectures about the elements of
$\Sel$, in particular:
\begin{conjecture}[Selberg orthogonality conjecture]
\label{conj:selberg}\hspace{1mm}
Let $F,G\in\Sel$ be primitive, in the sense that they
cannot be expressed non-trivially as products of elements of $\Sel$,
with Dirichlet coefficients $a_F(n)$ and $a_G(n)$.
Then
$$
\sum_{\substack{p\text{ prime}\\p\le x}}
\frac{a_F(p)\overline{a_G(p)}-\delta_{FG}}p
\ll_{F,G}1,
$$
where $\delta_{FG}=1$ if $F=G$ and $0$ otherwise.
\end{conjecture}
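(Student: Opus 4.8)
The orthogonality conjecture is a notoriously hard open problem, and I do not propose a complete proof, but rather the standard line of attack, which succeeds precisely in the cases where the $L$-functions involved are known to be automorphic. Write $c(p)=a_F(p)\overline{a_G(p)}$. By Mertens' theorem $\sum_{p\le x}1/p=\log\log x+O(1)$, the asserted bound is equivalent to
$$
\sum_{p\le x}\frac{c(p)}p=\delta_{FG}\log\log x+O_{F,G}(1),
$$
so it suffices to control the ``prime part'' $P(s):=\sum_p c(p)p^{-s}$, which converges for $\Re(s)>1$ by axiom~(4), near the line $\Re(s)=1$. The plan is to exhibit a function, holomorphic and non-vanishing on a neighbourhood of $\{\Re(s)\ge1\}$ except for a pole of order exactly $\delta_{FG}$ at $s=1$, whose logarithm agrees with $P(s)$ up to an error holomorphic on a neighbourhood of the line $\Re(s)=1$; one then gets $P(s)=\delta_{FG}\log\frac1{s-1}+(\text{holomorphic near }\Re(s)\ge1)$, and a Tauberian argument of Wiener--Ikehara type --- sharpened, using a zero-free region slightly to the left of $\Re(s)=1$, so as to produce the clean $O(1)$ rather than merely $o(\log\log x)$ --- yields the displayed asymptotic.

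The function in question is a Rankin--Selberg convolution $L(s,F\times\overline G)$, with Euler factors built from the Satake parameters of $F$ and $G$ so that $\log L(s,F\times\overline G)=\sum_p c(p)p^{-s}+(\text{contribution of higher prime powers})$. The three properties needed --- meromorphic continuation past $\Re(s)=1$, a pole at $s=1$ of order $\delta_{FG}$ (this is exactly where primitivity enters: $F=G$ must force a simple pole and $F\ne G$ no pole), and non-vanishing on $\Re(s)=1$ --- are all theorems when $F$ and $G$ are the finite parts of the standard $L$-functions of cuspidal automorphic representations $\pi,\pi'$ of $\GL_m(\A_\Q)$ and $\GL_n(\A_\Q)$: continuation and the pole structure come from the integral representation and the constant terms of Eisenstein series (Jacquet--Piatetski-Shapiro--Shalika), and non-vanishing on the edge of the critical strip is due to Jacquet--Shalika (or follows from the analytic theory of Eisenstein series). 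In that setting the argument goes through and one obtains the conjecture, with the sharp error $O_{F,G}(1)$.

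The principal obstacle is the step I have quietly skipped: producing $L(s,F\times\overline G)$ from $F,G\in\Sel$. The axioms describe $F$ and $G$ one at a time and supply no operation that manufactures their Rankin--Selberg product; there is no known way, using only axioms (1)--(5), to attach to the pair $(F,G)$ any Dirichlet series with provable analytic continuation even slightly beyond $\Re(s)=1$. Doing so is, via converse theorems, essentially equivalent to knowing that the primitive elements of $\Sel$ are automorphic, which is open in all degrees $\ge2$ apart from special cases. What one can realistically import --- the degree-$\le1$ classification (and partial degree-$2$ results) of Kaczorowski--Perelli, together with Rankin--Selberg theory wherever automorphy is known --- settles the conjecture only under such additional hypotheses. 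I would therefore present the outcome as: (a) an unconditional theorem when $F$ and $G$ are automorphic, proved by the Rankin--Selberg-plus-Tauberian argument above; and (b) in general, a reduction of the conjecture to the single analytic assertion that $P(s)$ continues to a neighbourhood of $\{\Re(s)\ge1\}$ with the correct singularity at $s=1$, which is exactly where genuinely new input is required.
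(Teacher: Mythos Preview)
The statement is a \emph{conjecture}, not a theorem, and the paper does not attempt to prove it; it is stated as background and motivation only, with a footnote remarking that in the automorphic setting it is essentially known (in a slightly weaker form including prime powers) via Rankin--Selberg theory. Your proposal correctly identifies the problem as open in general, accurately sketches the Rankin--Selberg-plus-Tauberian route that succeeds in the automorphic case, and pinpoints the genuine obstruction (no intrinsic construction of $L(s,F\times\overline G)$ from the Selberg axioms alone); this matches the paper's own stance, so there is nothing to compare against and your summary is appropriate.
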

This idea of codifying the properties of $L$-functions is appealing
as an alternative to the Langlands program. However, one immediate
problem is that it is not obvious which properties of $L$-functions
should be taken as axioms, and which are theorems to be derived from
the axioms.  More to the point, Selberg's choice of axioms does not
correspond perfectly to the properties of the known $L$-functions, i.e.\
those associated to automorphic representations. For instance, the
Ramanujan bound $a(n)\ll_\varepsilon n^\varepsilon$ remains a conjecture
for most automorphic $L$-functions, but Conjecture~\ref{conj:selberg}
is essentially known in that context.\footnote{In full generality it is
known in a slightly weaker form that includes the prime powers in the
sum, and those may be removed under a mild hypothesis; see \cite{wy,as}.}
This difficulty seems inherent to the axiomatic approach and may never
be resolved completely, since there are differing opinions about what
properties of $L$-functions are essential, and it is likely impossible
to avoid making at least some choices based purely on aesthetics.

Nevertheless, Selberg's paper has been influential in shaping the way
that researchers think about $L$-functions, and has spurred a large
volume of research, both attempting to classify the elements of $\Sel$
and studying the consequences of Conjecture~\ref{conj:selberg}.
The general belief is that $\Sel$ essentially coincides with the
class of automorphic $L$-functions. However, Selberg's list of axioms
is in principle more flexible; for instance, the local Euler factor
$\exp\bigl(\sum_{n=1}^\infty b(p^n)p^{-ns}\bigr)$ can be any function of
the form $e^{f(p^{-s})}$, where $f$ is analytic on a disc of radius
$p^{-\theta}$ for some $\theta<\frac12$ and satisfies $f(0)=0$.
This is substantially more general than the factors that occur for
automorphic $L$-functions (which are always reciprocal polynomials
of $p^{-s}$) and permits some natural operations, such as taking
square roots and quotients.  On the other hand, the $\Gamma$-factors
$\Gamma(\lambda{s}+\mu)$, while again more general than those associated
to automorphic $L$-functions (for which we may always reduce to
the case $\lambda=\frac12$), do not seem to occur naturally when
$\lambda\notin\frac12\Z_{>0}$, so this offers no effective increase in
generality.\footnote{Selberg acknowledges in a footnote of his paper
that we may take $\lambda$ to be a half-integer in every known case.
It seems likely that he did not intend for his definition to be taken
as a serious attempt at generalization, but rather as a recognition
of the fact that the $\Gamma$-factors are non-canonical because of
the Legendre and Gauss multiplication formulas. We note that those
identities have analogues at the finite places as well, e.g.\ the
Legendre duplication formula is analogous to the ``difference of squares''
identity $1-p^{-2s}=(1-p^{-s})(1+p^{-s})$, but this ambiguity causes no
real confusion.  Note also that the analogue of $\Gamma(\lambda{s})$ is
the generalized Dirichlet series $1/(1-p^{-2\lambda{s}})$, which is not
permitted under Selberg's definition unless $\lambda$ is a half-integer.}

In this paper, we propose a broader set of axioms with the goal
of putting the $\Gamma$-factors on the same level of generality as the
other Euler factors, and as we will show, this enables some new
applications.  Our approach is to change language, and express
everything not in terms of $L$-functions directly (since there is no
agreement on how they should be defined anyway), but in terms of their
\emph{explicit formulae}.
Following the point of view introduced by Weil \cite{weil1}, these are
identities of distributions relating the zeros of an $L$-function to the
coefficients of its logarithmic derivative via the Fourier transform. For
instance, if $\chi\pmod{q}$ is an even primitive Dirichlet character
with complete $L$-function $\Lambda(s,\chi)=\Gamma_\R(s)L(s,\chi)$
and $g:\R\to\C$ is a sufficiently nice test function (e.g.\ smooth of
compact support) with Fourier transform $h(z)=\int_\R g(x)e^{izx}\,dx$
satisfying $h(\R)\subseteq\R$, then the explicit formula is the identity
$$
\begin{aligned}
\sum_{z\in\C}m(z)h(z)=2\Re\biggl[
&\int_0^\infty\bigl(g(0)-g(x)\bigr)\frac{e^{-x/2}}{1-e^{-2x}}\,dx\\
&+\frac12\left(\log\frac{q}{8\pi}-\gamma-\frac{\pi}2\right)
g(0)-\sum_{n=2}^{\infty}
\frac{\Lambda(n)\chi(n)}{\sqrt{n}}g(\log{n})\biggr],
\end{aligned}
$$
where $m(z)=\ord_{s=\frac12+iz}\Lambda(s,\chi)$.

Here the integral kernel $\frac{e^{-x/2}}{1-e^{-2x}}$ is related to the
$\Gamma$-factor $\Gamma_\R(s)$ associated to $\chi$ (by a logarithmic
derivative and Fourier transform).  Since the explicit formula is
additive, i.e.\ the formula for a product of $L$-functions is the
sum of the individual formulas, in this language it is clear how the
$\Gamma$-factors can be deformed.  For instance, replacing $\Gamma_\R(s)$
by its square root amounts to dividing the kernel by $2$.  It is also
now clear how to generalize the notion of $\Gamma$-factor---we simply
consider any suitable integral kernel. Of course, which kernel functions
should be considered ``suitable'' is again open to interpretation, but
there is one essential feature of the kernels occurring in the explicit
formulae of $L$-functions that must be present, namely a first-order
singularity at $0$, with residue reflecting the degree.\footnote{It is
tempting to consider more general singularities as well, but we
quickly find ourselves in a much larger landscape of functions that is
presumably very hard to classify; for instance, the Selberg zeta-functions
and their trace formulae give identities of this type with second-order
singularities.} All other conditions should be chosen to suit the
desired applications. With that in mind, after some trial and error,
we arrived at the following definition.
\begin{definition}\label{def:Ldatum}
An \emph{$L$-datum} is a triple $F=(f,K,m)$, where $f:\Z_{>0}\to\C$,
$K:\R_{>0}\to\C$ and $m:\C\to\R$ are functions satisfying the
following axioms:
\begin{itemize}
\item[(A1)]$f(1)\in\R$, $f(n)\log^k{n}\ll_k1$ for all $k>0$, and
$\sum_{n\le x}|f(n)|^2\ll_\varepsilon x^\varepsilon$ for all
$\varepsilon>0$;
\item[(A2)]$xK(x)$ extends to a Schwartz function on $\R$, and
$\lim_{x\to0^+}xK(x)\in\R$;
\item[(A3)]$\supp(m)=\{z\in\C:m(z)\ne0\}$ is discrete and contained in a
horizontal strip $\{z\in\C:|\Im(z)|\le y\}$ for some $y\ge0$,
$\sum_{\substack{z\in\supp(m)\\|\Re(z)|\le T}}|m(z)|\ll 1+T^A$
for some $A\ge0$, and
$\#\{z\in\supp(m):m(z)\notin\Z\}<\infty$;
\item[(A4)]for every smooth function $g:\R\to\C$
of compact support and Fourier transform
$h(z)=\int_{\R}g(x)e^{ixz}\,dx$ satisfying
$h(\R)\subseteq\R$, we have the equality
$$
\sum_{z\in\supp(m)}m(z)h(z)
=2\Re\left[\int_0^\infty{K(x)}(g(0)-g(x))\,dx
-\sum_{n=1}^{\infty}f(n)g(\log n)\right].
$$
\end{itemize}
Given an $L$-datum $F=(f,K,m)$,
we associate an \emph{$L$-function} $L_F(s)$ defined by
$$
L_F(s)=\sum_{n=1}^\infty a_F(n)n^{-s}=
\exp\biggl(\sum_{n=2}^\infty\frac{f(n)}{\log{n}}n^{\frac12-s}\biggr)
\quad\text{for }\Re(s)>1;
$$
we call $d_F=2\lim_{x\to0^+}xK(x)$ the \emph{degree} of $F$
and $Q_F=e^{-2f(1)}$ its \emph{analytic conductor};
and we say that $F$ is \emph{positive} if there are at most finitely
many $z\in\C$ with $m(z)<0$.
\end{definition}

Let $\LL$ denote the set of all $L$-data and $\LL^+\subseteq\LL$
the subset of positive elements.  Note that $\LL$ is a group with
respect to addition, with identity element $(0,0,0)$, and $\LL^+$
is a monoid.  For any $d\in\R$, let $\LL_d=\{F\in\LL:d_F=d\}$
and $\LL_d^+=\LL_d\cap\LL^+$.
\begin{examples}\hspace{1mm}
\begin{enumerate}
\item
If $L(s)=\exp\bigl(\sum_{n=2}^\infty b(n)n^{-s}\bigr)$
is an element of the Selberg class with complete $L$-function
$$
\Phi(s)=\epsilon Q^s\prod_{j=1}^k\Gamma(\lambda_js+\mu_j)\cdot L(s),
$$
then there is an $L$-datum $F=(f,K,m)\in\LL^+$ satisfying
$d_F=2\sum_{j=1}^k\lambda_j$, $L_F(s)=L(s)$,
$$
f(n)=\begin{cases}
-\log{Q}-\Re\sum_{j=1}^k\lambda_j
\frac{\Gamma'}{\Gamma}(\frac{\lambda_j}2+\mu_j)&\text{if }n=1,\\
\frac{b(n)\log{n}}{\sqrt{n}}&\text{if }n>1,
\end{cases}
$$
$$
K(x)=\sum_{j=1}^k\frac{e^{-(\frac12+\frac{\mu_j}{\lambda_j})x}}
{1-e^{-\frac{x}{\lambda_j}}},
\quad\text{and}\quad
m(z)=\ord_{s=\frac12+iz}\Phi(s).
$$
Note in particular that the estimate
$\sum_{n\le x}|f(n)|^2\ll_\varepsilon x^\varepsilon$
follows from the Ramanujan hypothesis together with the
bound $b(n)\ll n^\theta$ (see \cite[Lemma in \S2]{murty}).
\item
If $\pi$ is a unitary cuspidal automorphic
representation of $\GL_d(\A_\Q)$ with conductor $q$,
$$
L(s,\pi_\infty)=\prod_{j=1}^d\Gamma_\R(s+\mu_j),
\quad
-\frac{L'}{L}(s,\pi)=\sum_{n=2}^\infty c_nn^{-s}
\quad\text{and}\quad
\Lambda(s,\pi)=L(s,\pi_\infty)L(s,\pi),
$$
then there is an $L$-datum $F=(f,K,m)\in\LL_d^+$ satisfying
$L_F(s)=L(s,\pi)$,
$$
f(n)=\begin{cases}
-\frac12\log{q}-
\Re\sum_{j=1}^d\frac{\Gamma_\R'}{\Gamma_\R}(\frac12+\mu_j)&\text{if }n=1,\\
\frac{c_n}{\sqrt{n}}&\text{if }n>1,
\end{cases}
$$
$$
K(x)=\sum_{j=1}^d\frac{e^{-(\frac12+\mu_j)x}}{1-e^{-2x}},
\quad\text{and}\quad
m(z)=\ord_{s=\frac12+iz}\Lambda(s,\pi).
$$
In this case, the estimate $\sum_{n\le x}|f(n)|^2\ll\log^2{x}$
for $x\ge2$ follows from the Rankin--Selberg method
(see \cite[(2.24)]{rs}), and the other conditions on $f$ and $K$
follow from partial results toward the Ramanujan conjecture \cite{lrs}.
\item If $\rho:\Gal(\overline{\Q}/\Q)\to\GL_d(\C)$ is an Artin
representation then there is an $L$-datum $F=(f,K,m)\in\LL_d$ with
$L_F(s)=L(s,\rho)$, and $f,K,m$ defined similarly to the case of
automorphic $L$-functions above. The Artin conjecture asserts
that $F$ is positive.
\end{enumerate}
\end{examples}

\begin{remarks}\hspace{1mm}
\begin{enumerate}
\item
Note that we do not require an Euler product, and in fact the primes make no
appearance in Definition~\ref{def:Ldatum}.
What effectively replaces this is the assumption of non-vanishing
outside the critical strip, which is implied by the absolute convergence
of $\log L_F(s)$ for $\Re(s)>1$.
In \cite{bt} it was shown in wide generality that this condition
essentially characterizes the Euler products among all Dirichlet series
associated to automorphic forms. For instance, it follows from
\cite[Theorem~1.1]{bt} and Theorem~\ref{thm:multone} below that
if $f\in S_k(\Gamma_1(N))$ is a classical holomorphic modular form then
there is an $L$-datum $F\in\LL_2^+$ with $L_F(s)=L(s+\frac{k-1}2,f)$
if and only if $f$ is a normalized newform and Hecke eigenform.
\item We have not imposed the Ramanujan bound $a_F(n)\ll_\varepsilon
n^\varepsilon$, largely to avoid excluding most of the automorphic
$L$-functions.  However, this has the side effect of
including some examples which might be deemed undesirable, e.g.\
$\zeta(2s-\frac12)$ is the $L$-function of some element of $\LL_2^+$.
We take the view that it is better to include a few
misfits in our definition than to throw out the baby with the bath
water, and one can always pass to a restricted subclass if this becomes
problematic.\footnote{There is also an argument if favor of keeping
examples like $\zeta(2s-\frac12)$ in the definition:
Shimura's integral representation for the symmetric square $L$-function
could be viewed as an extension of the Rankin--Selberg method to this
example, and that in turn was a key ingredient in the
proof of the Gelbart--Jacquet lift.}
\item Definition~\ref{def:Ldatum} is arguably simpler than
Definition~\ref{def:selbergclass}, since it makes no mention of
Euler products, the $\Gamma$-function or analytic continuation.
It is also more concrete, compared to the rather intangible notion of
analytic continuation, since one can study axiom (A4) as an
identity of unknowns to be solved for.  This was the essential point of
\cite[Proposition~4.2]{bhk}, which may be viewed as a prototype for our
Theorem~\ref{thm:converse} below.
\item The notion of analytic conductor as a measure of complexity of an
$L$-function was introduced in \cite{iwaniec-sarnak}.  Our formulation
is similar (but not identical) to that of \emph{log conductor}
in \cite{cfkrs}. We make no claims that this formulation is the most
suitable in all contexts, but it at least has the feature of being
canonically defined, as shown by Theorem~\ref{thm:multone} below.
\end{enumerate}
\end{remarks}

\subsection{Main results}
The map $F\mapsto L_F$ defines a homomorphism from $\LL$ to the
multiplicative group of non-vanishing holomorphic functions on
$\{s\in\C:\Re(s)>1\}$. Our first result shows that this map is
injective, i.e.\ each $L$-datum is determined by its $L$-function, in
the following strong sense.
\begin{theorem}[Multiplicity one]
\label{thm:multone}
For $F=(f,K,m)\in\LL$, the following are equivalent:
\begin{itemize}
\item[(i)]$F=(0,0,0)$;
\item[(ii)]$\sum_{n=2}^\infty\frac{|f(n)|}{\log{n}}<\infty$;
\item[(iii)]$\sum_{n=1}^\infty\frac{|a_F(n)|}{\sqrt{n}}<\infty$;
\item[(iv)]$L_F(s)$ is a ratio of Dirichlet polynomials;
\item[(v)]$\sum_{\substack{z\in\supp(m)\\|\Re(z)|\le T}}|m(z)|=o(T)$.
\end{itemize}
\end{theorem}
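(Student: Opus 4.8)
The plan is to establish the cyclic chain (i)$\Rightarrow$(ii)$\Rightarrow$(iii)$\Rightarrow$(iv)$\Rightarrow$(v)$\Rightarrow$(i). The first step is trivial, since $F=(0,0,0)$ forces $f\equiv0$, making the sum in (ii) empty. For (ii)$\Rightarrow$(iii), note that (ii) says the Dirichlet series whose exponential is $L_F$, namely $G(s):=\sum_{n\ge2}\bigl(\tfrac{f(n)}{\log n}n^{1/2}\bigr)n^{-s}$, converges absolutely at $s=\tfrac12$. Since $L_F=e^{G}$ on $\{\Re s>1\}$, comparing the Dirichlet coefficients of $e^G$ term by term (through the power series of $\exp$) with the nonnegative coefficients of $\exp\!\bigl(\sum_{n\ge2}\tfrac{|f(n)|}{\log n}n^{1/2}n^{-s}\bigr)$, which sum absolutely at $s=\tfrac12$ to $\exp\!\bigl(\sum_{n}\tfrac{|f(n)|}{\log n}\bigr)<\infty$, yields $\sum_n|a_F(n)|n^{-1/2}<\infty$, i.e.\ (iii).

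For (iii)$\Rightarrow$(iv)$\Rightarrow$(v) the idea is to first extract from (A4), by the standard contour–shift argument (the reverse of Weil's passage from $\Lambda_F$ to the explicit formula), the meromorphic continuation and functional equation $\Lambda_F(s)=\overline{\Lambda_F(1-\bar s)}$ of the completed function $\Lambda_F(s)=Q_F^{s}\gamma_K(s)L_F(s)$, where $\gamma_K$ is the ``$\Gamma$-factor'' attached to $K$ and $\ord_{s=1/2+iz}\Lambda_F=m(z)$. Under (iii) one has $|L_F(s)|\le\sum_n|a_F(n)|n^{-1/2}$ for $\Re s\ge\tfrac12$, so $L_F$ is bounded on that half-plane; the functional equation together with Stirling's formula for $\gamma_K$ then forces polynomial growth of $L_F$ on every vertical line, hence finite order for $\Lambda_F$. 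Since by (A3) the zeros and poles of $\Lambda_F$ lie in a bounded vertical strip and number $O(T^A)$ up to height $T$, a Bohr/Cahen--Landau-type rigidity statement for Dirichlet series that are bounded in right half-planes identifies $L_F$ as a ratio of Dirichlet polynomials, which is (iv). Conversely, if $L_F=P/Q$ with $P,Q$ Dirichlet polynomials (normalised so $P,Q\to1$ as $\Re s\to+\infty$), the functional equation gives $\gamma_K(s)/\overline{\gamma_K(1-\bar s)}=Q_F^{1-2s}\,\overline{L_F(1-\bar s)}/L_F(s)$, whose right side is a rational function of finitely many $n^{-s}$; this is incompatible with the left side being a ratio of honest Mellin transforms of the kernel $K$ unless $P=Q$, so $L_F\equiv1$, and then the concentrated–test–function computation below (applied with the $f$-part of (A4) already reduced to $f(1)g(0)$) yields $d_F=0$ and $m\equiv0$, so (v) holds.

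The crux is (v)$\Rightarrow$(i). Fix a fixed smooth even bump $\phi$ with $\supp\phi\subseteq[-1,1]$ and $\phi(0)>0$, and apply (A4) to $g_\delta(x)=\tfrac1\delta\phi(x/\delta)$ as $\delta\to0^+$. Splitting $K(x)=\tfrac{d_F/2}{x}\eta(x)+R(x)$ with $\eta$ a cutoff at the origin and $R$ the (bounded, rapidly decreasing) remainder supplied by (A2), one computes $\int_0^\infty K(x)(g_\delta(0)-g_\delta(x))\,dx=\tfrac{\phi(0)d_F}{2\delta}\log\tfrac1\delta+O(1/\delta)$, while for $\delta<\log2$ the $f$-term is $f(1)g_\delta(0)$; hence the right side of (A4) is $\tfrac{\phi(0)d_F}{\delta}\log\tfrac1\delta+O(1/\delta)$. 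On the left side, the Paley--Wiener bound $|\widehat\phi(\delta z)|\ll_N(1+\delta|\Re z|)^{-N}$ on the strip containing $\supp(m)$ together with the hypothesis $\sum_{|\Re z|\le T}|m(z)|=o(T)$ gives, by partial summation, $\sum_z m(z)\widehat\phi(\delta z)=o(1/\delta)$. Comparing forces $d_F=0$, so $K$ extends continuously to $[0,\infty)$ and is rapidly decreasing. Repeating with $\phi$ translated to sit over $\log n_0$ (and with a real and an imaginary variant, using the admissible functions $2\widehat\phi(\delta z)\cos(z\log n_0)$ and $-2\widehat\phi(\delta z)\sin(z\log n_0)$) the same comparison — now with a bounded right side except for the $-\tfrac{2\phi(0)}{\delta}\,f(n_0)$ term — forces $f(n_0)=0$ for every $n_0\ge2$. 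Thus $L_F\equiv1$ and $\sum_n f(n)g(\log n)=f(1)g(0)$, so (A4) collapses to $\sum_z m(z)h(z)=-\int_{\R}\widetilde K(x)g(x)\,dx$, where $\widetilde K$ is the Hermitian extension of $K$ (using $f(1)=\Re\!\int_0^\infty K$, which drops out of an earlier comparison). The right side is the pairing of $g$ with a fixed $L^1$ function, so the functional $g\mapsto\sum_z m(z)h(z)$ is represented by an $L^1$ function of $x$; since the inverse Fourier transform of $\sum_z m(z)\delta_z$ cannot be a function unless $m\equiv0$, we get $m\equiv0$, whereupon $\int_{\R}\widetilde K g=0$ for all admissible $g$ gives $K\equiv0$ and $f(1)=0$. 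Hence $F=(0,0,0)$.

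The main obstacle is the passage (iii)$\Rightarrow$(iv): obtaining enough analytic control on $L_F$ — in particular the polynomial growth on vertical lines and the finite-order/rigidity input — \emph{without} the Ramanujan bound, relying only on the $L^2$-estimate in (A1), and making the contour-shift extraction of the functional equation from the distributional identity (A4) fully rigorous. A secondary technical point requiring care is the $\log\tfrac1\delta$ asymptotic of the kernel integral against $g_\delta$ and the matching $o(1/\delta)$ bound on the spectral side in the step (v)$\Rightarrow d_F=0$.
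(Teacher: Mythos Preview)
Your (v)$\Rightarrow$(i) is essentially the paper's argument: concentrated bumps at $x_0=\log n_0$ played against the $o(T)$ hypothesis force $f(n_0)=0$ for every $n_0\ge2$.  The paper packages your real/imaginary variants into one test function $g(x)=e^{i\theta}g_0(T(x-x_0))+e^{-i\theta}g_0(T(x+x_0))$ and does \emph{not} first prove $d_F=0$; once $f(n)=0$ for $n\ge2$ one has $L_F\equiv1$, hence (iv), and the already-established (iv)$\Rightarrow$(v) (in its strong form $m\equiv0$) finishes.  Your final distributional step (``the inverse Fourier transform of $\sum_z m(z)\delta_z$ cannot be a function'') is not safe as stated, since the points of $\supp m$ may be genuinely complex and $\sum_z m(z)\delta_z$ is then not a tempered distribution on $\R$; routing through (iv) avoids this.

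The substantive gaps are (iii)$\Rightarrow$(iv) and (iv)$\Rightarrow$(v).  There is no off-the-shelf ``Bohr/Cahen--Landau rigidity'' saying that a Dirichlet series bounded on $\{\Re s\ge\tfrac12\}$, even one with a functional equation, must be a ratio of Dirichlet polynomials; you have to exhibit a mechanism.  The paper's is a stationary-phase argument on the critical line: (iii) makes $L_F$ continuous up to $\Re s=\tfrac12$, and Proposition~\ref{prop:acfe} gives $\Lambda_F(\tfrac12+it)\in\R$ for large $t$, so
\[
L_F(\tfrac12+it)=\overline{L_F(\tfrac12+it)}\,e^{-2i\arg\gamma_F(\frac12+it)}.
\]
Multiplying by $m^{it}$ and averaging over $t\in[T,2T]$, the left side tends to $a_F(m)/\sqrt m$ by absolute convergence, while the right side is an oscillatory integral with phase $t\log(mn)-d_Ft\log(t/e)-2c_{-1}t+O(\log t)$.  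If $d_F\ne0$ there is no stationary point and one gets $a_F(1)=0$, a contradiction; with $d_F=0$ the phase is stationary only when $mn=e^{2c_{-1}}$, forcing $q:=e^{2c_{-1}}\in\Z_{>0}$ and $a_F(m)=0$ unless $m\mid q$.  So $L_F$ is a genuine Dirichlet polynomial, not merely a ratio.

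For (iv)$\Rightarrow$(v), your ``incompatibility'' between Mellin-type $\gamma$-factors and rational functions of finitely many $n^{-s}$ is not a proof---the $\gamma$-factors in Definition~\ref{def:Ldatum} are far more flexible than classical $\Gamma$-factors, and no such dichotomy is available.  The paper instead uses axiom (A1) directly: if $L_F=P/Q$ with $P,Q$ Dirichlet polynomials then $a_F(n)$, hence $f(n)$, is supported on $y$-smooth $n$ for some $y$; applying $f(n)\log^kn\ll_k1$ with $k=\pi(y)+1$ shows $\sum_{n\ge2}|f(n)|n^{-1/2}<\infty$, so $L_F'/L_F$ is holomorphic on $\Re s>\tfrac12$ and continuous to the boundary.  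Combined with Proposition~\ref{prop:acfe}(i) and the functional equation this makes $\Lambda_F$ entire and nonvanishing, i.e.\ $m\equiv0$.
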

\noindent
Thus, although we have chosen to promote the three components $f$,
$K$ and $m$ of our definition equally, without loss of generality one
can focus only on the $L$-functions, as in the Selberg class.

Next, we show that the classification of the degree $d<\frac53$ elements
of the Selberg class, begun by Conrey--Ghosh \cite{cg} and continued
and refined by Kaczorowski--Perelli \cite{kp1,kp53} and Soundararajan
\cite{sound}, can be adapted to our setting.  (We speculate that
Kaczorowski and Perelli's very intricate extension \cite{kp2} to degree
$<2$ could be adapted as well, but have not attempted to do so.)
\begin{theorem}[Converse theorem]\label{thm:converse}
Let $F\in\LL_d^+$ for some $d<\frac53$.
Then either $d=0$ and $L_F(s)=1$, or
$d=1$ and there is a primitive Dirichlet character $\chi$ and
$t\in\R$ such that $L_F(s)=L(s+it,\chi)$.
\end{theorem}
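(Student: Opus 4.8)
\emph{Strategy.} The plan is to carry over to our setting the classification of degree $<\tfrac53$ elements of the Selberg class, using the explicit formula (A4) and the positivity of $F$ in place of the functional equation, Euler product and Ramanujan-type estimates used there. The first step is to recover from (A1)--(A4) the usual analytic package. From (A2) one attaches to $K$ a generalized $\Gamma$-factor $\gamma_F(s)$ --- a finite-order meromorphic function with $\gamma_F'/\gamma_F(s)\sim\tfrac d2\log s$ as $\Re(s)\to+\infty$, reducing to $\prod_j\Gamma(\lambda_js+\mu_j)$ in the setting of the first of the Examples following Definition~\ref{def:Ldatum}. Testing (A4) against bump functions localized near the points $\log n$ and against smoothed indicators of intervals, one then shows that $\Lambda_F(s):=\gamma_F(s)L_F(s)$ continues to a meromorphic function of finite order whose divisor is $\sum_z m(z)\,[\tfrac12+iz]$ and which satisfies $\Lambda_F(s)=\varepsilon_F\,\overline{\Lambda_F(1-\bar s)}$ for some $\varepsilon_F$ with $|\varepsilon_F|=1$, and one obtains a Riemann--von Mangoldt formula $N_F(T):=\sum_{|\Re(z)|\le T}m(z)=\tfrac d\pi T\log T+O(T)$. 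Here (A3) supplies the finite order, positivity keeps the finitely many $z$ with $m(z)<0$ under control, and in particular positivity forces $d\ge0$.

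\emph{Low degrees.} The case $d=0$ follows quickly: then $K$ has no pole at the origin, so both $\arg\gamma_F(\tfrac12+iT)$ and $\arg L_F(\tfrac12+iT)$ are $O(\log T)$, whence the Riemann--von Mangoldt count gives $N_F(T)=o(T)$; combined with positivity this yields $\sum_{|\Re(z)|\le T}|m(z)|=o(T)$, so $F=(0,0,0)$ and $L_F\equiv1$ by Theorem~\ref{thm:multone}. For $0<d<1$ one runs the Conrey--Ghosh argument (essentially Bochner's theorem, as in \cite{cg}): the convexity bound from the first step together with the Dirichlet-series expansion of $L_F$ and a contour shift forces $a_F(n)=0$ for all $n\ge2$, i.e.\ $L_F$ is a ratio of Dirichlet polynomials, contradicting $d>0$ by Theorem~\ref{thm:multone}. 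For $1<d<\tfrac53$ one transcribes the Kaczorowski--Perelli analysis \cite{kp53} of the linear and non-linear twists $L_F(s,\alpha)=\sum_n a_F(n)e^{2\pi in\alpha}n^{-s}$: starting from (A4) with a translation built into the test function, one proves that these twists continue meromorphically to $\C$, are entire unless $\alpha$ lies in a prescribed countable set determined by $K$ (the ``conductor''), and otherwise have a single pole whose location and residue, weighed against the functional equation and the growth estimates above, leave no room when $1<d<\tfrac53$.

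\emph{Degree one.} When $d=1$ the same twist computation localizes the singular values of $\alpha$ to the rationals with a fixed denominator $q$, which forces $Q_F$ to be a fixed absolute constant times the positive integer $q$ and, after the shift $s\mapsto s+it$ normalizing the $\Gamma$-parameter of $\gamma_F$ to be real, yields the periodicity $a_F(n+q)=a_F(n)$ (equivalently, $n^{it}a_F(n)$ is periodic mod $q$). Now $L_F$ is, by construction, holomorphic and non-vanishing for $\Re(s)>1$; combined with the functional equation and periodicity, the result of \cite{bt} (which here takes the place of the Euler-product axiom of the Selberg class) forces the periodic function $n^{it}a_F(n)$ to be completely multiplicative, hence a Dirichlet character $\chi\bmod q$. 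Comparing the two sides of the functional equation then shows $\chi$ is primitive, so $L_F(s)=L(s+it,\chi)$ (cf.\ \cite{sound} for the analogous endgame in the Selberg class).

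\emph{Main obstacle.} The crux is the twist argument for $1<d<\tfrac53$, and its reuse when $d=1$: Kaczorowski--Perelli's treatment is built on the explicit classical $\Gamma$-factor and functional equation, so the work is to re-derive the meromorphic continuation and pole analysis of $L_F(\cdot,\alpha)$ directly from (A4), with $K$ allowed to be an arbitrary kernel satisfying (A2) rather than a sum of the classical pieces. One might hope to bypass this by first showing that positivity together with $d<\tfrac53$ forces $K(x)=\sum_j\frac{e^{-(\frac12+\mu_j/\lambda_j)x}}{1-e^{-x/\lambda_j}}$ with $\sum_j\lambda_j=\tfrac d2$, thereby reducing everything to the cited Selberg-class theorems; but establishing that $K$ must have this classical shape appears to require essentially the same analysis, and is itself the main point of difficulty.
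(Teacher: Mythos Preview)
Your overall strategy---recover the analytic package (this is Proposition~\ref{prop:acfe}) and then adapt the Conrey--Ghosh and Kaczorowski--Perelli arguments---is precisely what the paper does. But two of your sub-arguments have concrete gaps.

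For $d=0$ the zero-counting approach fails as written. By Proposition~\ref{prop:acfe}(ii) with $d=0$ one has $\log\gamma_F(s)=c_{-1}(s-\tfrac12)+\tfrac{\mu}{2}\log\tfrac{s}{e}+O(1)$, so $\arg\gamma_F(\tfrac12+iT)=c_{-1}T+O(\log T)$; the linear term (essentially the log of the analytic conductor) need not vanish a priori, and your claim that $\arg L_F(\tfrac12+iT)=O(\log T)$ is likewise unjustified since nothing is known about $L_F$ on the critical line at this stage. Thus Riemann--von Mangoldt only yields $N_F(T)=O(T)$, whereas Theorem~\ref{thm:multone}(v) requires $o(T)$. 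The paper sidesteps this by treating all of $d<1$ uniformly via the exponential sum $S_F(z)=\sum a_F(n)e(nz)$: a contour shift combined with Stirling-type bounds (Lemma~\ref{lem:SFkint} and \eqref{eq:mainint}) gives $S_F^{(k)}(z)\ll_{\varepsilon}\Im(z)^{-\varepsilon}$, whence $a_F(n)\ll_{k,\varepsilon}n^{-k+\varepsilon}$, and one concludes via Theorem~\ref{thm:multone}(iii).

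For $d=1$, the reference \cite{bt} concerns modular forms and does not apply to periodic Dirichlet series; the correct input is \cite[Theorem~4]{sw}, which says that a periodic Dirichlet series non-vanishing for $\Re(s)>1$ factors as $D(s)L(s,\chi)$ with $D$ a Dirichlet polynomial and $\chi$ primitive. You do not get complete multiplicativity directly; the paper removes $D$ by forming $F-F_\chi\in\LL_0$ (where $F_\chi$ is the $L$-datum of $\chi$) and invoking Theorem~\ref{thm:multone}(iv). Your outline for $1<d<\tfrac53$ is in the right spirit; the paper executes it entirely through $S_F$ and explicit stationary phase and van der Corput estimates (Lemmas~\ref{lem:SFkdgt1} and \ref{lem:Sigmaformula}), using only the asymptotic expansion of $\gamma_F$ from Proposition~\ref{prop:acfe}(ii)---so the general kernel $K$ enters only through Stirling-type approximations and, contrary to your closing worry, never needs to be shown to have the classical shape.
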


\subsection{Applications}
We describe three applications of Theorem~\ref{thm:converse}. The first
two concern the zeros of automorphic $L$-functions.
\begin{corollary}\label{cor:oddzeros}
Let $\pi$ be a unitary cuspidal automorphic representation of
$\GL_3(\A_\Q)$. Then its complete $L$-function $\Lambda(s,\pi)$
has infinitely many zeros of odd order.
\end{corollary}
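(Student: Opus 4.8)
The plan is to derive this from the converse theorem (Theorem~\ref{thm:converse}) by a contradiction argument that uses the group structure of $\LL$ to ``halve'' an $L$-datum. Concretely, the cuspidal representation $\pi$ of $\GL_3(\A_\Q)$ gives rise, as in the examples following Definition~\ref{def:Ldatum}, to an $L$-datum $F=(f,K,m)\in\LL_3^+$ with $m(z)=\ord_{s=\frac12+iz}\Lambda(s,\pi)$; note that $m$ is integer-valued (an order of a meromorphic function) and, since $\Lambda(s,\pi)$ is entire for cuspidal $\pi$, in fact takes values in $\Z_{\ge0}$.

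Suppose, for contradiction, that $\Lambda(s,\pi)$ has only finitely many zeros of odd order, and set $G=(\frac12 f,\frac12 K,\frac12 m)$. Axioms (A1), (A2), the explicit formula (A4) --- which for $G$ is exactly one half of that for $F$, applied to the same test functions --- and every clause of (A3) other than the integrality clause all transfer from $F$ to $G$ by a trivial rescaling. As for the clause $\#\{z:\frac12 m(z)\notin\Z\}<\infty$: since $m(z)\in\Z_{\ge0}$, we have $\frac12 m(z)\notin\Z$ precisely when $m(z)$ is odd, i.e.\ precisely when $\frac12+iz$ is a zero of $\Lambda(s,\pi)$ of odd order, and there are finitely many such by assumption. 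Hence $G\in\LL$; it is positive because every value $\frac12 m(z)$ is nonnegative; and $d_G=\frac12 d_F=\frac32$. But $\frac32<\frac53$, so Theorem~\ref{thm:converse} applies to $G$ and forces $d_G\in\{0,1\}$ --- a contradiction. Therefore $\Lambda(s,\pi)$ has infinitely many zeros of odd order.

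I do not expect a real obstacle here: the substance of the argument lies in Theorem~\ref{thm:converse}, which may be assumed, and the rest is bookkeeping. The only step in the corollary itself that requires any care is checking the integrality clause of (A3) for the halved datum, and this is exactly where the (to-be-contradicted) hypothesis of finitely many odd-order zeros is consumed. It is also worth noting that the argument works precisely because $3$ is \emph{odd}, so that $\frac32$ falls strictly between the two degrees $0$ and $1$ allowed by Theorem~\ref{thm:converse} while staying below $\frac53$; the same reasoning disposes of $\GL_1$ trivially but yields nothing for $\GL_d$ with $d\ge5$.
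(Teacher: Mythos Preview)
Your argument is correct and is essentially identical to the paper's proof: both associate to $\pi$ the $L$-datum $F\in\LL_3^+$, observe that under the finiteness hypothesis the halved datum $\tfrac12F$ lies in $\LL_{3/2}^+$, and invoke Theorem~\ref{thm:converse} for a contradiction. You have simply spelled out in more detail the verification of axioms (A1)--(A4) for $\tfrac12F$, correctly identifying the integrality clause of (A3) as the one place where the hypothesis is used.
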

\begin{proof}
Let $F=(f,K,m)\in\LL^+_3$ be the $L$-datum associated to $\pi$.
If $\Lambda(s,\pi)$ has at most finitely many zeros of odd order
then $m(z)$ is an even integer for all but at most finitely many $z$,
and thus $\frac12F\in\LL^+_{3/2}$, in
contradiction to Theorem~\ref{thm:converse}.
\end{proof}

\begin{corollary}\label{cor:distinctzeros}
For $j=1,2$, let $\pi_j$ be a unitary cuspidal automorphic representation
of $\GL_{d_j}(\A_\Q)$ with complete $L$-function $\Lambda(s,\pi_j)$.
If $d_2-d_1\le1$ and $\pi_1\not\cong\pi_2$ then
$\Lambda(s,\pi_2)/\Lambda(s,\pi_1)$ has infinitely many poles.
\end{corollary}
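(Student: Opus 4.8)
The plan is to argue by contradiction, adapting the proof of Corollary~\ref{cor:oddzeros}. For $j=1,2$ let $F_j=(f_j,K_j,m_j)\in\LL^+_{d_j}$ be the $L$-datum attached to $\pi_j$, and set $F=F_2-F_1$. Since $\LL$ is a group under addition, $F\in\LL$; its degree is $d_F=d_2-d_1\le1$, its $m$-component is $m_F=m_2-m_1$, and $L_F(s)=L(s,\pi_2)/L(s,\pi_1)$ for $\Re(s)>1$. Since $m_j(z)=\ord_{s=\frac12+iz}\Lambda(s,\pi_j)$, for each $s=\frac12+iz\in\C$ the order of the pole of $\Lambda(s,\pi_2)/\Lambda(s,\pi_1)$ at $s$ equals $\max\{0,-m_F(z)\}$. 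Hence if $\Lambda(s,\pi_2)/\Lambda(s,\pi_1)$ had only finitely many poles, then $m_F(z)\ge0$ for all but finitely many $z$, i.e.\ $F$ would be positive, so that $F\in\LL^+_d$ with $d=d_2-d_1\le1<\tfrac53$.

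Now I would apply Theorem~\ref{thm:converse} to $F$: either $d=0$ and $L_F(s)=1$, or $d=1$ and $L_F(s)=L(s+it,\chi)$ for some primitive Dirichlet character $\chi$ and $t\in\R$ (in particular $d\ge0$, so that $d_1\le d_2$). In the first case $d_1=d_2$ and, since the constant $1$ is trivially a ratio of Dirichlet polynomials, Theorem~\ref{thm:multone} gives $F=(0,0,0)$; thus $F_1=F_2$, and comparing Euler factors at the unramified places shows that $\pi_1$ and $\pi_2$ have isomorphic components at almost all places, so strong multiplicity one for $\GL_{d_1}$ gives $\pi_1\cong\pi_2$, contrary to hypothesis. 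In the second case $d_2=d_1+1$ and $L(s,\pi_2)=L(s,\pi_1)\,L(s+it,\chi)$; the right-hand side is the standard $L$-function of the isobaric sum $\pi_1\boxplus(\chi\otimes|\det|^{it})$ on $\GL_{d_2}(\A_\Q)$, which is not cuspidal, so by the uniqueness of the isobaric decomposition (Jacquet--Shalika) this contradicts the cuspidality of $\pi_2$. Either way we reach a contradiction, proving the corollary.

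The only substantive step is the reduction in the first paragraph; the rest merely quotes standard facts about automorphic $L$-functions. The point requiring a little care is the bookkeeping with the completed $L$-functions, namely that the poles of $\Lambda(s,\pi_2)/\Lambda(s,\pi_1)$ occur exactly at the points where $m_2-m_1<0$, so that the archimedean $\Gamma$-factors and any trivial zeros are correctly absorbed into the functions $m_j$. This is immediate from the definition of $m_j$ as $\ord_s\Lambda(s,\pi_j)$, so I do not anticipate any further obstacle.
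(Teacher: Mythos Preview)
Your argument is correct and follows the same line as the paper: form the difference $F=F_2-F_1\in\LL$, observe that finitely many poles of $\Lambda(s,\pi_2)/\Lambda(s,\pi_1)$ means $F$ is positive, apply Theorem~\ref{thm:converse}, and rule out the two possible conclusions using $\pi_1\not\cong\pi_2$ and the cuspidality of $\pi_2$. The only cosmetic difference is that the paper disposes of the case $d_2<d_1$ up front by a direct zero-counting remark, whereas you let Theorem~\ref{thm:converse} absorb it (since it forces $d\in\{0,1\}$); your invocation of Theorem~\ref{thm:multone} in the $d=0$ case is harmless but unnecessary, as $L_F(s)=1$ already gives $L(s,\pi_1)=L(s,\pi_2)$ directly.
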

\begin{proof}
If $d_2<d_1$ then the conclusion follows by counting zeros, so we
may assume that $d_2\in\{d_1,d_1+1\}$.
Let $F\in\LL$ be the $L$-datum with $L$-function
$L_F(s)=L(s,\pi_2)/L(s,\pi_1)$, so that $d_F\in\{0,1\}$.
If $\Lambda(s,\pi_2)/\Lambda(s,\pi_1)$
has at most finitely many poles then $F$ is positive, so by
Theorem~\ref{thm:converse}, either $L_F(s)=1$ or
$L_F(s)=L(s+it,\chi)$ for some primitive Dirichlet character $\chi$ and
$t\in\R$. However, neither of these is possible since
$\pi_1\not\cong\pi_2$ and $\pi_2$ is cuspidal.
\end{proof}
\begin{remarks}\hspace{1mm}
\begin{enumerate}
\item
The assumption of cuspidality is only for ease of presentation,
and one could formulate versions of both of the above results for
products of cuspidal $L$-functions.
\item If $\pi_1$ and $\pi_2$ are unitary cuspidal automorphic
representations of $\GL_{d_1}(\A_\Q)$ and $\GL_{d_2}(\A_\Q)$
with $\pi_1\not\cong\pi_2$, the Grand Simplicity Hypothesis
predicts that $\Lambda(s,\pi_1)\Lambda(s,\pi_2)$ has at most finitely
many non-simple zeros.
Corollaries~\ref{cor:oddzeros} and \ref{cor:distinctzeros} give some
modest evidence in that direction. The fact that these results are new
is testimony of the difficulty of proving anything about the zeros of
high degree $L$-functions!
\item
Corollary~\ref{cor:distinctzeros} could likely be strengthened to
$d_2-d_1\le2$ by combining the methods of this paper with those of
\cite{bk}. Some special cases along these lines were demonstrated
by Raghunathan \cite{ra}.
\end{enumerate}
\end{remarks}

Our third application generalizes a result of Lemke-Oliver \cite{lo},
who considered totally multiplicative functions
$f:\Z_{>0}\to D=\{z\in\C:|z|\le1\}$ whose summatory functions
exhibit better than square-root cancellation relative to their mean-square
size, i.e.\
\begin{equation}\label{eq:cancellation}
\sum_{n\le x}|f(n)|^2\gg x\quad\text{and}\quad
\sum_{n\le x}f(n)\ll x^{\frac12-\delta}
\;\text{ for some }\delta>0.
\end{equation}
Lemke-Oliver noted that this holds if $f$ is a non-trivial
Dirichlet character, and asked if that is essentially the
only example. Although the problem appears to be intractable in
full generality, he was able to make progress for the subclass of
$f$ that are \emph{dictated by Artin symbols}, in the sense that there
is a Galois extension $K/\Q$ such that for every prime $p$ that does not
ramify in $K$, $f(p)$ depends only on the Frobenius conjugacy class
$\Frob_p$ at $p$.  His proof shows that for such $f$
there is a decomposition
$$
f(p)=\sum_\chi a_\chi\chi(\Frob_p)
$$
for all unramified primes $p$, where $\chi$ ranges over the characters
of the irreducible representations of $\Gal(K/\Q)$, and $a_\chi\in\Q$.
Thus, the Dirichlet series $\sum_{n=1}^\infty f(n)n^{-s}$ behaves like
an Artin $L$-function of degree $d=\sum_\chi a_\chi$, which is the value
of $f$ at split primes.

Lemke-Oliver concluded that $f$ must agree with a Dirichlet character
for almost all $p$ under the assumption that $d=1$, by
adapting Soundararajan's proof \cite{sound} of the classification
of degree $1$ elements of the Selberg class.  Note that $f(p)$
assumes only finitely many values for unramified $p$, each occurring
with positive density, so by the Selberg--Delange method, we expect
that the lower bound in \eqref{eq:cancellation} is only possible if
$f(p)\in\partial{D}=\{z\in\C:|z|=1\}$.  If that is indeed the case then we
must have $d=1$, but rather than attempting to justify that
heuristic, it suffices to appeal to Theorem~\ref{thm:converse}; in fact,
we obtain following stronger result.
\begin{corollary}
Let $f:\Z_{>0}\to\C$ be a totally multiplicative function dictated
by Artin symbols, with $|f(p)|<\frac53$ for all primes $p$.  If $f$
satisfies \eqref{eq:cancellation} then there is a primitive Dirichlet
character $\chi$ such that $f(p)=\chi(p)$ for all but at most finitely
many primes $p$.
\end{corollary}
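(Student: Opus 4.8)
The plan is to manufacture a positive $L$-datum out of $f$ whose degree is the split value of $f$, and then to invoke Theorem~\ref{thm:converse}. Write $f(p)=\sum_\chi a_\chi\chi(\Frob_p)$ for $p$ unramified in $K$, with $\chi$ running over the irreducible characters of $\Gal(K/\Q)$, $\rho_\chi$ the corresponding representations and $a_\chi\in\Q$, and set $d=\sum_\chi a_\chi\chi(1)$. Since the split primes have positive density by the Chebotarev theorem and $f(p)=d$ at each of them, the hypothesis gives $d\in\Q$ with $|d|<\tfrac53$. The natural candidate is the $L$-datum $F=(f_F,K_F,m_F)$ attached to $\mathcal M(s)=\prod_\chi L(s,\rho_\chi)^{a_\chi}$: so $L_F=\mathcal M$, with $f_F$ read off from $-\mathcal M'/\mathcal M$ (hence $f_F(p^k)=p^{-k/2}\log p\sum_\chi a_\chi\chi(\Frob_p^k)$ at unramified $p$, with an obvious modification at ramified $p$), $K_F(x)=\sum_\chi a_\chi\sum_j e^{-(1/2+\mu_{\chi,j})x}/(1-e^{-2x})$ coming from the archimedean factors $\prod_\chi\prod_j\Gamma_\R(s+\mu_{\chi,j})^{a_\chi}$ (so that $d_F=d$), and $m_F(z)=\ord_{s=1/2+iz}\Lambda_{\mathcal M}(s)$ with $\Lambda_{\mathcal M}=\prod_\chi\Lambda(s,\rho_\chi)^{a_\chi}$ the completed function. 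I would first check, along the lines of Examples~1--3, that (A4) for $F$ is precisely the explicit formula for $\mathcal M$; that (A1)--(A2) hold, the Ramanujan-type bounds in (A1) being automatic because $|\sum_\chi a_\chi\chi(\Frob_p^k)|$ is bounded uniformly in $p,k$; that $\Lambda_{\mathcal M}(s)=\varepsilon Q^s\overline{\Lambda_{\mathcal M}(1-\bar s)}$ with $|\varepsilon|=1$ (from the Artin functional equations and re-indexing $\chi\mapsto\bar\chi$ on the dual side, using $a_\chi\in\R$); and, via Brauer's theorem, that $\supp(m_F)$ lies in the strip $|\Im z|\le\tfrac12$ with $\sum_{|\Re z|\le T}|m_F(z)|\ll 1+T^2$. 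The only parts of the definition still in doubt are the finiteness of $\{z:m_F(z)\notin\Z\}$ and positivity.

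Establishing those is the crux, and where \eqref{eq:cancellation} is used and the rational $a_\chi$ must be controlled; I expect this to be the main difficulty. The idea is to compare $\mathcal M$ with the honest Dirichlet series of $f$. Since $\tfrac53<\sqrt3$, the only prime with $|f(p)|\ge\sqrt p$ is $p=2$; let $S$ consist of $2$ together with the primes ramifying in $K$, and put $D(s)=\prod_{p\notin S}(1-f(p)p^{-s})^{-1}$. First, the upper bound in \eqref{eq:cancellation} with partial summation shows $\sum_n f(n)n^{-s}$ continues holomorphically to $\Re(s)>\tfrac12-\delta$, and multiplying by the Dirichlet polynomial $\prod_{p\in S}(1-f(p)p^{-s})$ shows the same for $D$; so $D$ is holomorphic, with integer-order zeros, on a neighbourhood of the critical line. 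Next, comparing Euler factors gives $D=e^{H}\mathcal M$ on $\Re(s)>1$, where the linear terms cancel by the definition of the $a_\chi$ and $H(s)=\sum_{k\ge2}\tfrac1k\sum_{p\notin S}\bigl(f(p)^k-\sum_\chi a_\chi\chi(\Frob_p^k)\bigr)p^{-ks}$ plus a finite sum over $p\in S$. Because $2\notin S^c$, the $k\ge3$ terms converge absolutely for $\Re(s)>\log(5/3)/\log3<\tfrac12$, hence near the critical line; and the $k=2$ term, in the variable $w=2s$, continues holomorphically across $\Re(w)=1$ off $w=1$ because Hecke $L$-functions do not vanish on $\Re(w)=1$. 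Letting $N$ be a common denominator of the $a_\chi$, this makes $e^{NH}$ holomorphic and non-vanishing near every point of the critical line except $s=\tfrac12$. The identity $D^{N}=e^{NH}\mathcal M^{N}$ then extends past the critical line (minus $s=\tfrac12$) by analytic continuation, exhibiting the zeros of the single-valued function $\mathcal M^{N}$ as $N$ times those of the holomorphic $D$; so $\mathcal M$ has non-negative integer-order zeros there, and the functional equation propagates this to $\Re(s)<\tfrac12$. Hence $m_F(z)$ is a non-negative integer for every $z$ except possibly $z=0$, so (A3) holds in full and $F$ is positive.

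It then remains to apply Theorem~\ref{thm:converse} to $F\in\LL_d^+$ with $d<\tfrac53$. If $d=0$ then $\mathcal M\equiv1$, so $f(p)=\sum_\chi a_\chi\chi(\Frob_p)=0$ for every $p$ unramified in $K$; then $\sum_n f(n)n^{-s}$ is a finite Euler product holomorphic on $\Re(s)>\tfrac12-\delta$, forcing $|f(p)|\le p^{1/2-\delta}$ for the ramified $p$. Every $n$ with $f(n)\ne0$ is then divisible only by ramified primes, so there are $O((\log x)^{|S|})$ of them up to $x$, each with $|f(n)|^2\le n^{1-2\delta}$; hence $\sum_{n\le x}|f(n)|^2\ll x^{1-2\delta}(\log x)^{|S|}=o(x)$, contradicting the lower bound in \eqref{eq:cancellation}. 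Therefore $d=1$, and $\mathcal M(s)=L(s+it,\chi)$ for a primitive Dirichlet character $\chi$ and some $t\in\R$. Comparing Euler factors of $\mathcal M$ and $L(s+it,\chi)$ at each $p$ unramified in $K$ gives $f(p)=\chi(p)p^{-it}$ (with $\chi(p)=0$ when $p\mid\operatorname{cond}\chi$). Since $f$ is dictated by Artin symbols it takes only finitely many values on the unramified primes, and since $\chi(p)$ is a root of unity or $0$ this would force $p^{-it}$ to take finitely many values on a set of primes of positive density — impossible unless $t=0$. Hence $f(p)=\chi(p)$ for every prime $p$ unramified in $K$, i.e.\ for all but finitely many $p$.
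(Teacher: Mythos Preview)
Your proof is correct and follows the approach the paper outlines: the paper does not give a detailed argument but refers to Lemke-Oliver's decomposition with $a_\chi\in\Q$ and then invokes Theorem~\ref{thm:converse} once $d<\tfrac53$ is established. The substance you add---building the $L$-datum for $\mathcal M=\prod_\chi L(s,\rho_\chi)^{a_\chi}$, verifying positivity via the identity $D^N=e^{NH}\mathcal M^N$ (using the cancellation hypothesis to make $D$ holomorphic past $\Re(s)=\tfrac12$ and the non-vanishing of Artin $L$-functions on $\Re(w)=1$ to control the $k=2$ term of $H$), and disposing of the cases $d=0$ and $t\ne0$---is exactly the detail the paper leaves to \cite{lo} and to the reader, and your treatment is sound.
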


\subsection{Concluding remarks}
Above we described three applications of our expanded notion of
$L$-functions.  However, our results so far have relied on essentially
the same arguments as those already applied to the Selberg class,
and it is natural to wonder whether this train of thought might also
lead to insights that go beyond those arguments, perhaps as far as a
classification of $\LL_d^+$ for some $d\ge2$.  In particular, can we
improve on the converse theorem for classical modular forms?

While we are unable to give any definitive answers to this question,
we offer a few philosophical remarks and suggestions for future work.
\begin{remarks}\label{rem:philosophy}
Let $\LL^{\mathrm{aut}}$ be the subgroup of $\LL$ generated by
the $L$-data associated to unitary cuspidal automorphic representations
of $\GL_d(\A_\Q)$ for all $d$. Presumably $\LL^{\mathrm{aut}}$ coincides with
the subset of $F\in\LL$ satisfying the Ramanujan bound
$a_F(n)\ll_\varepsilon n^\varepsilon$, but at present
we cannot prove an inclusion in either direction.
The elephant in the room is that $\LL^{\mathrm{aut}}$ is not only
a group, but has the additional structure of a commutative ring, at
least conjecturally.  Precisely, if $F_1,F_2\in\LL^{\mathrm{aut}}$ are
generators corresponding to cuspidal representations $\pi_1,\pi_2$,
then the Langlands functoriality conjecture predicts that there
is an automorphic representation with $L$-function equal to the
Rankin--Selberg product $L(s,\pi_1\times\pi_2)$, and we define
the product $F_1F_2\in\LL^{\mathrm{aut}}$ to be the $L$-datum with
that $L$-function.

The approach to classifying the elements of the Selberg class
taken so far purposefully ignores most of this structure and
relies essentially on Fourier analysis, which
amounts to considering twists by $n^{-it}$, i.e.\ multiplication (in the
above sense) by
$F\in\LL$ with $L_F(s)=\zeta(s+it)$. Note that such $F$ are units in
$\LL^{\mathrm{aut}}$, as are the $L$-data corresponding to $L(s+it,\chi)$
for primitive Dirichlet characters $\chi$.

Put in these terms, one cannot
help but wonder whether it would be more natural to build stability
under twist into the definition, at least by all of the units, i.e.\
to consider the subclass of $F\in\LL$ which have a twist $F_\chi\in\LL$
for every primitive character $\chi$.  For this subclass, it seems
likely that one could adapt the existing converse theorems for classical
holomorphic and Maass modular forms to classify the positive elements
of degree $2$.  (In fact, it might only be necessary to assume that $F$
is positive, and not all of the twists $F_\chi$, by following the method
of \cite{bk}.) Moreover, Cogdell and Piatetski-Shapiro conjectured
\cite[p.~166]{cps} that the analytic properties of twists by characters
should in general suffice to characterize the automorphic representations
among all irreducible admissible representations, so there is at least
some hope of eventually classifying everything of integral degree
this way.

However, there are a few subtleties that need to be considered before this
can be carried out.  First, in all known versions of the converse theorem
for degree at least $2$ (beginning with Weil \cite{weil2}), knowledge
of the relationship between the root numbers and conductors of a given
$L$-series and its twists is essential in the proof.  On the other hand,
Definition~\ref{def:Ldatum} does not even mention the root number (it
makes only a brief appearance in the proof of Proposition~\ref{prop:acfe},
as a constant of integration), and as our results demonstrate, it plays
no role in the classification of low-degree elements of $\LL^+$.

Second, the role of the Euler product in the converse theorem is similarly
hazy.  It has been conjectured that the degree $2$ $L$-functions with
Euler products can be characterized by a converse theorem without any
twists, but this is known to be false if one drops the Euler product
assumption. (In the other direction, with the added information from
twists, Weil's converse theorem does not require an Euler product.)
More generally, there are examples of Dirichlet series (e.g.\ certain
Shintani zeta-functions \cite{thorne}) which, together with all of their
character twists, have meromorphic continuation and satisfy a functional
equation, but are not associated to automorphic representations. These
examples do not contradict Cogdell and Piatetski-Shapiro's conjecture
since they lack Euler products.  Thus, the Euler product seems to be an
important hypothesis for characterizing automorphic representations with
minimal analytic data, but it is far from clear why this is so.

In our definition we offered a weaker alternative (non-vanishing outside
the critical strip, implied by axiom (A1))
as a possible substitute, and we speculate that it
may help to shed light on the matter.  In any case, we find it likely
that in order to make progress on the classification for degree $2$ and
beyond, one must first clarify the roles that the Euler product and
root number play in the converse theorem.  As tentative steps in this
direction, we issue the following challenges:
\begin{enumerate}
\item Prove a converse theorem for classical holomorphic modular forms,
assuming that all character twists satisfy the expected analytic
properties, but \emph{without knowledge of the root number}.
\item Prove a converse theorem for automorphic representations of
$\GL_3(\A_\Q)$, assuming axiom (A1) and that all character twists have
the expected analytic properties, but \emph{without requiring an Euler
product}.
\end{enumerate}
Of course it might be that one or both of these is impossible,
in which case a proof that there is no such result would be even more
interesting!
\end{remarks}

\subsection*{Acknowledgements}
This work was carried out during a year-long stay at the Research
Institute for Mathematical Sciences, Kyoto, Japan. It is a pleasure
to thank all of the RIMS staff, in particular my host, Akio Tamagawa,
for their generous hospitality. I would also like to thank Akihiko Yukie
for organizing the Conference on Automorphic Forms at Kyoto University
in June 2013, which provided the impetus for this work.  Finally,
I thank Frank Thorne for performing some computations in relation to
Remarks~\ref{rem:philosophy}, and Brian Conrey, David Farmer, Peter Sarnak
and Akshay Venkatesh for helpful suggestions.

\section{Basic properties}
In this section we establish the basic properties of the $L$-functions
associated to elements of $\LL$, culminating in the proof of
Theorem~\ref{thm:multone}.  First, we show that the derivation of the
explicit formula for $L$-functions can be inverted to prove that for
any $F\in\LL$, a suitably ``completed'' form of $L_F(s)$ has meromorphic
continuation and satisifies a functional equation. In particular,
we construct a canonical notion of ``$\Gamma$-factor'' associated to $F$,
as follows.
\begin{proposition}[Meromorphic continuation and functional equation]
\label{prop:acfe}
For every $F=(f,K,m)\in\LL$,
there is a function $\gamma_F(s)$, defined uniquely up
to scaling by elements of $\R^\times$,
with the following properties:
\begin{enumerate}
\item[(i)] $\log\gamma_F(s)$ is holomorphic for $\Re(s)>\frac12$, 
and $\frac{d^n}{ds^n}\log\gamma_F(s)$ extends continuously to 
$\Re(s)\ge\frac12$ for each $n\ge0$;
\item[(ii)]
there are constants $d,c_{-1}\in\R$ and $\mu,c_0,c_1,\ldots\in\C$ such that
$$
\log\gamma_F(s)=
\left(s-\frac12\right)\left(\frac{d}2\log\frac{s}{e}+c_{-1}\right)
+\frac{\mu}2\log\frac{s}{e}
+\sum_{j=0}^{n-1}\frac{c_j}{s^j}+O_n(|s|^{-n}),
$$
uniformly for $\Re(s)\ge\frac12$ and any fixed $n\ge0$;
\item[(iii)] the product $\Lambda_F(s)=\gamma_F(s)L_F(s)$
continues meromorphically to
$$
\Omega=\C\setminus
\bigcup_{\{z\in\C:m(z)\notin\Z\}}\Bigl[
\bigl(\tfrac12+i(-\infty,\Re(z)]\bigr)\cup
\bigl(\bigl[\tfrac12-|\Im(z)|,\tfrac12+|\Im(z)|\bigr]+i\Re(z)\bigr)
\Bigr]
$$
and has meromorphic finite order, i.e.\
$\Lambda_F(s)=h_1(s)/h_2(s)$, where $h_1$ and $h_2$ are holomorphic on
$\Omega$, and there is a number $A\ge0$ such that
for any closed subset $E\subseteq\Omega$ we have
$h_1(s),h_2(s)\ll_E\exp(|s|^A)$ for all $s\in E$;
\item[(iv)] the functional equation
$\Lambda_F(s)=\overline{\Lambda_F(1-\bar{s})}$ holds as an identity of
meromorphic functions on $\Omega$;
\item[(v)]
$\frac{\Lambda_F'}{\Lambda_F}(s)$ continues meromorphically to
$\C$, with at most simple poles, and satisfies
$$
\Res_{s=\frac12+iz}\frac{\Lambda_F'}{\Lambda_F}(s)=m(z)
\quad\text{for all }z\in\C.
$$
In particular, $\supp(m)\subseteq\{z\in\C:|\Im(z)|\le\frac12\}$.
\end{enumerate}
\end{proposition}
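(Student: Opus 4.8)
The plan is to reverse-engineer the $\Gamma$-factor from the explicit formula (A4) by treating the kernel $K$ as a distributional logarithmic derivative. Concretely, I would define $\log\gamma_F(s)$ so that its derivative matches the Mellin-type transform of $K$: motivated by the model case $K(x)=\sum_j e^{-(\frac12+\mu_j/\lambda_j)x}/(1-e^{-x/\lambda_j})$ arising from $\prod_j\Gamma(\lambda_j s+\mu_j)$, one expects
$$
\frac{\gamma_F'}{\gamma_F}(s)=f(1)+\int_0^\infty\Bigl(\frac{e^{-(s-\frac12)x}}{?}-K(x)\Bigr)\,dx
$$
type formula; more cleanly, I would set $\log\gamma_F(s)=(s-\tfrac12)f(1)+\int_0^\infty\bigl(K(x)\cdot\tfrac{e^{-(s-1/2)x}-1}{x}+\text{correction}\bigr)\,dx$, chosen so that $\frac{d}{ds}\log\gamma_F(s)=f(1)-\int_0^\infty K(x)e^{-(s-1/2)x}\,dx$ after justifying convergence using (A2) (namely $xK(x)$ Schwartz, with $\lim_{x\to0^+}xK(x)=d_F/2\in\R$). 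The constant of integration is free, which accounts for the $\R^\times$-scaling ambiguity, and the reality of $f(1)$ and of $\lim xK(x)$ forces the reality of $d$ and $c_{-1}$ in (ii). Part (i) then follows from differentiating under the integral sign, and the asymptotic expansion (ii) comes from Watson's lemma applied to $\int_0^\infty K(x)e^{-(s-1/2)x}\,dx$: the $\frac{d}{2x}$ singularity of $K$ produces the $\frac d2\log\frac se$ term, a possible $\frac{\mu}{2x}$-type next term (the constant term of $xK(x)-d/2$ in its Taylor expansion at $0$, paired appropriately) gives $\frac\mu2\log\frac se$, and the higher Taylor coefficients of the Schwartz function $xK(x)$ give the asymptotic series $\sum c_j s^{-j}$; integration then yields the stated closed form after fixing the integration constants $c_j$.

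With $\gamma_F$ in hand, I would prove (iii) and (iv) together by the standard contour-shift / Mellin-inversion argument underlying the explicit formula, run in reverse. Define $\Lambda_F(s)=\gamma_F(s)L_F(s)$, a priori only for $\Re(s)>1$. For a test function $g$ as in (A4), the right-hand side of (A4) is, by construction of $\gamma_F$ and the definition $L_F(s)=\exp(\sum f(n)n^{1/2-s}/\log n)$, exactly (twice the real part of) an integral of $\frac{\Lambda_F'}{\Lambda_F}(s)\hat g$ along the line $\Re(s)=1+\varepsilon$; meanwhile the left-hand side $\sum m(z)h(z)$ is a sum of residues. So (A4) says that $\frac{\Lambda_F'}{\Lambda_F}$, initially holomorphic for $\Re(s)>1$, continues past the line $\Re(s)=\tfrac12$ with residues $m(z)$ at $\tfrac12+iz$ and — crucially — satisfies the reflection $\frac{\Lambda_F'}{\Lambda_F}(s)+\frac{\Lambda_F'}{\Lambda_F}(1-s)=$ (the relevant $m$-sum) in the distributional sense, which integrates to the functional equation (iv) with the root number appearing as the constant of integration. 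The removed set $\Omega^c$ in (iii) is precisely where the branch points coming from the finitely many $z$ with $m(z)\notin\Z$ force cuts — half-lines $\frac12+i(-\infty,\Re z]$ and the horizontal segments — so away from those, $\Lambda_F$ is genuinely meromorphic, and its finite order follows from the polynomial bound $\sum_{|\Re z|\le T}|m(z)|\ll 1+T^A$ in (A3) combined with the polynomial growth of $\gamma_F$ from (ii) and of $L_F$ on $\Re(s)\ge1+\varepsilon$, pushed across via the functional equation (a Phragmén–Lindelöf argument in the cut plane $\Omega$).

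Finally, part (v): once (iii) and (iv) are established, $\frac{\Lambda_F'}{\Lambda_F}$ is meromorphic on $\Omega$ with simple poles and residues $m(z)$; to get meromorphic continuation all the way to $\C$ with the residue identity holding for \emph{all} $z\in\C$, I would note that across each cut the jump of $\frac{\Lambda_F'}{\Lambda_F}$ is $2\pi i$ times an integer-valued function (since the only non-integer orders are the finitely many prescribed $m(z)\notin\Z$, which are accounted for separately), so in fact $\frac{\Lambda_F'}{\Lambda_F}$ extends meromorphically across the cuts with only simple integer-residue poles — the cuts were an artifact of writing $\Lambda_F$ itself rather than its logarithmic derivative. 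The last assertion $\supp(m)\subseteq\{|\Im z|\le\tfrac12\}$ then follows: $L_F(s)$ is non-vanishing and holomorphic for $\Re(s)>1$ by absolute convergence of $\log L_F$, and by the functional equation $\Lambda_F(s)=\overline{\Lambda_F(1-\bar s)}$ it is non-vanishing for $\Re(s)<0$ as well except where $\gamma_F$ has poles or zeros; since $\frac{\gamma_F'}{\gamma_F}$ has its poles confined (from (ii)) to $\Re(s)\le\tfrac12$ near the real axis, the poles of $\frac{\Lambda_F'}{\Lambda_F}$ at $\tfrac12+iz$ can only occur for $|\Im z|\le\tfrac12$, forcing $m(z)=0$ otherwise.

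I expect the main obstacle to be step two: making the reversed contour-shift argument rigorous when $m$ has finitely many non-integer values, so that $\Lambda_F$ genuinely has branch points and one must carefully identify the maximal domain $\Omega$ of single-valued meromorphic continuation and verify the finite-order bound there via Phragmén–Lindelöf in a cut plane. Establishing (i) and (ii) is essentially Watson's lemma and should be routine; the non-vanishing input for (v) is immediate from (A1), so it is the interplay between the cuts, the functional equation, and the growth estimate that is delicate.
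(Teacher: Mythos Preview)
Your overall strategy matches the paper's: build $\gamma_F$ from $K$, rewrite the right side of (A4) as a contour integral of $\frac{\Lambda_F'}{\Lambda_F}$, read off continuation and reflection for the logarithmic derivative, then integrate. But the crucial mechanism is missing. You assert that ``(A4) says that $\frac{\Lambda_F'}{\Lambda_F}$ continues past $\Re(s)=\tfrac12$'' without explaining how; the paper's engine is a separate Fourier lemma: if $\varphi$ is holomorphic in a lower half-plane and $\int_{\Im z=-c}\varphi(z)h(z)\,dz\in\R$ for every $h\in H$, then $\varphi$ extends to an entire function of polynomial growth in strips with $\varphi(\bar z)=\overline{\varphi(z)}$ (proved via $u(x)=\frac1{2\pi}\int\varphi(z)e^{-z^2-ixz}\,dz$ and the observation $u(-x)=\overline{u(x)}$). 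The point is that the $2\Re[\cdots]$ in (A4) forces reality of a certain contour integral, and \emph{that} is what yields continuation and the functional equation simultaneously. To apply the lemma one must first subtract from $\frac{\Phi'}{\Phi}$ (with $\Phi(z)=\Lambda_F(\tfrac12+iz)$) an explicit partial-fraction function $q(z)$ carrying the prescribed residues $m(z)$; you never construct such a $q$, and a preliminary lemma $m(\bar z)=m(z)$ (itself proved by the same Fourier trick) is needed to make $q$ compatible with the reflection. Your ``reflection in the distributional sense'' and ``jump across cuts'' remarks substitute assertion for this argument; in particular, (v) in the paper comes directly from $\frac{\Phi'}{\Phi}-q$ being entire, not from analysing monodromy across cuts after the fact.

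There is also a convergence problem in your construction of $\gamma_F$: since $K(x)\sim\frac{d_F}{2x}$ near $0$, both $\int_0^\infty K(x)e^{-(s-1/2)x}\,dx$ and $\int_0^\infty K(x)\frac{e^{-(s-1/2)x}-1}{x}\,dx$ diverge at $0$ (the latter like $-\frac{d_F(s-1/2)}{2x}$). The paper instead subtracts the explicit kernels $\frac{d_F}{4\sinh(x/2)}-\frac{\mu}{2\cosh(x/2)}$, whose transforms are known digamma combinations, leaving $K_1$ with $K_1(x)/x$ Schwartz on $\R$; then $\gamma_F$ is written as an explicit product of $\Gamma(s)^{d_F/2}$, $\bigl(\Gamma((s{+}1)/2)/\Gamma(s/2)\bigr)^\mu$, and the exponential of the now-convergent transform of $K_1$. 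The $\log\frac{s}{e}$ terms in (ii) then come from Stirling applied to these explicit $\Gamma$-factors, with integration by parts on the $K_1$ piece supplying the $\sum c_j s^{-j}$ tail --- Watson's lemma on the raw $K$ does not apply. Finally, uniqueness is a separate argument you omit: the ratio $r$ of two candidates is entire, non-vanishing, satisfies $r(s)=\overline{r(1-\bar s)}$, and by (ii) grows at most like a power, hence is a nonzero real constant.
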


\begin{remark}
The proof of Proposition~\ref{prop:acfe} shows that the number
$d$ appearing in (ii) is the degree $d_F=2\lim_{x\to0^+}xK(x)$,
and $\mu=-2\lim_{x\to0^+}\frac{d}{dx}(xK(x))$.
\end{remark}

\subsection{Lemmas}
We begin with a few lemmas, the first of which establishes the basic
equivalence between distributional identities and functions possessing
analytic continuation and a functional equation.  In what follows we
denote by $H$ the set of entire functions $h$ such that
$h(\R)\subseteq\R$ and the Fourier transform
$g(x)=\frac1{2\pi}\int_\R h(t)e^{-ixt}\,dt$ is smooth of compact support.
\begin{lemma}\label{lem:ft}
Let $\varphi$ be a holomorphic function on
$\{z\in\C:\Im(z)<y\}$ for some $y\in\R$, and suppose that
$\varphi(z)$ has at most polynomial growth as $|z|\to\infty$ in any fixed
horizontal strip $\{z\in\C:\Im(z)\in[a,b]\}$ with $a<b<y$.
Fix $c<y$, and suppose that $\int_{\Im(z)=c}\varphi(z)h(z)\,dz\in\R$
for every $h\in H$. Then $\varphi$ continues to an entire function,
with at most polynomial growth in horizontal strips, and satisfies the
functional equation $\varphi(\bar{z})=\overline{\varphi(z)}$.
\end{lemma}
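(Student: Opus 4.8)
The plan is to pass to the Fourier side and reinterpret the conclusion as a statement about a tempered distribution attached to $\varphi$. Writing $\widehat g(z)=\int_\R g(x)e^{ixz}\,dx$, one has $H=\{\widehat g:g\in C_c^\infty(\R),\ g(-x)=\overline{g(x)}\}$, and every $h\in H$ satisfies $h(\bar z)=\overline{h(z)}$. Define a distribution $D$ on $\R$ by $\langle D,g\rangle=\int_{\Im z=c}\varphi(z)\widehat g(z)\,dz$. Then $D$ is tempered (polynomial growth of $\varphi$ on the line against the Schwartz decay of $\widehat g$ there), and by Cauchy's theorem — the vertical sides of the contour vanishing because $\widehat g$ decays rapidly in $\Re z$ while $\varphi$ grows only polynomially in the strip — it is independent of $c<y$. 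In this language the hypothesis says precisely that $\langle D,g\rangle\in\R$ whenever $g(-x)=\overline{g(x)}$.

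The first key step is to extract decay of $D$ from the holomorphy of $\varphi$. For each fixed $c<y$, the function $t\mapsto\varphi(t+ic)$ is a tempered distribution whose Fourier transform equals $e^{c\xi}D(\xi)$; that this holds \emph{for all} $c<y$ simultaneously is exactly the statement that $\varphi$ is holomorphic, of polynomial growth, on the whole half-plane $\{\Im z<y\}$. Since $e^{c\xi}D$ is tempered for every such $c$ and $c$ ranges over all of $(-\infty,y)$, $D$ must decay faster than $e^{-M|\xi|}$ for every $M>0$ as $\xi\to-\infty$. Running the same argument for $\tilde\varphi(z):=\overline{\varphi(\bar z)}$ — holomorphic on $\{\Im z>-y\}$ with polynomial growth in strips — and its associated distribution $\tilde D$ (defined by contours $\Im z=c''>-y$, again independent of $c''$), now letting the height run to $+\infty$, shows that $\tilde D$ decays faster than every exponential as $\xi\to+\infty$.

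The reality hypothesis links $D$ and $\tilde D$. Conjugating the defining integral and using $\overline{\widehat g(z)}=\widehat g(\bar z)$ for symmetric $g$ gives $\overline{\langle D,g\rangle}=\langle\tilde D,g\rangle$ for such $g$; combined with $\langle D,g\rangle\in\R$ this yields $\langle D,g\rangle=\langle\tilde D,g\rangle$ for all symmetric $g$, hence $D=\tilde D$ since the symmetric functions span $C_c^\infty(\R)$ over $\C$. Thus $D$ decays faster than every exponential in both directions, so $\varphi(z)=\frac1{2\pi}\langle D,\ \xi\mapsto e^{-iz\xi}\rangle$ extends to an entire function (Fourier inversion of $\mathcal F[\varphi(\cdot+ic)]=e^{c\xi}D$ identifies it with the original $\varphi$ on $\{\Im z<y\}$, and with $\tilde\varphi$ on $\{\Im z>-y\}$), of polynomial growth in each horizontal strip — estimate the pairing against the super-exponential decay of $D$. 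Finally, the identity $\tilde D(\xi)=\overline{D(-\xi)}$, immediate from $\tilde\varphi(z)=\overline{\varphi(\bar z)}$, together with $D=\tilde D$ gives $D(\xi)=\overline{D(-\xi)}$, which upon Fourier inversion is exactly the functional equation $\varphi(\bar z)=\overline{\varphi(z)}$.

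I expect the main obstacle to be making rigorous the two places where the half-plane enters globally rather than near a single line: that $\mathcal F[\varphi(\cdot+ic)]=e^{c\xi}D$ for all $c<y$ really does force super-exponential decay of $D$ at $-\infty$, and, conversely, that a tempered distribution with super-exponential decay in both directions has an entire transform of polynomial growth in horizontal strips. The first is a Paley–Wiener phenomenon for boundary values of holomorphic functions; the example $\varphi(z)=\exp(e^{iz})$ — entire, polynomially bounded in each fixed strip, but with a distribution that decays on only one side, and which correctly fails the reality hypothesis — shows that both the full half-plane and the reality hypothesis are genuinely needed. For the polynomial-growth claim when $y\le0$ one needs, in addition, a Phragmén–Lindelöf argument to control $\varphi$ on the strip $y\le\Im z\le-y$ from the bounds already in hand on $\{\Im z<y\}$ and $\{\Im z>-y\}$. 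Everything else is routine manipulation with tempered distributions and Fourier inversion.
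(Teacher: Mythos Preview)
Your approach is correct in outline and conceptually the same as the paper's: pass to the Fourier side, use the reality hypothesis to deduce a reflection symmetry of the transform, and conclude that the one-sided exponential decay coming from holomorphy on a half-plane becomes two-sided, whence $\varphi$ is entire with the stated functional equation. The difference is purely technical. You work directly with the tempered distribution $D$ and invoke Paley--Wiener to convert ``$e^{c\xi}D$ tempered for all $c<y$'' into super-exponential decay, which, as you note, requires some care (and strictly speaking one must also argue that $D$ is represented by a function, not merely a distribution, before speaking of pointwise decay). The paper sidesteps all of this by the simple device of multiplying by the Gaussian $e^{-z^2}$: setting $u(x)=\frac{1}{2\pi}\int_{\Im z=c}\varphi(z)e^{-z^2-ixz}\,dz$, the integral is absolutely convergent on every horizontal line, so $u$ is an honest function, the contour-shift and the reality hypothesis (applied, via an approximation argument, to $h(z)=e^{-z^2}\cos(xz)$ and $e^{-z^2}\sin(xz)$) give $u(-x)=\overline{u(x)}$ and hence $u(x)\ll_c e^{c|x|}$ for every $c<y$, and Fourier inversion of $\varphi(z)e^{-z^2}$ is then elementary. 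The Gaussian trick buys you a proof that needs no distribution theory at all; your route is more conceptual and makes the Paley--Wiener structure explicit, at the cost of the bookkeeping you flagged in your final paragraph.
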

\begin{proof}
Consider the integral
$$
u(x)=\frac1{2\pi}\int_{\Im(z)=c}\varphi(z)e^{-z^2-ixz}\,dz.
$$
Since $\varphi(z)e^{-z^2}$ is holomorphic and of rapid decay in
horizontal strips
for $\Im(z)<y$, $u(x)$ is independent of the value of $c$.
For any fixed $x\in\R$, $e^{-z^2}\cos(xz)$ and
$e^{-z^2}\sin(xz)$ are real valued for $z\in\R$, so it follows by a
standard approximation argument that
$$u(x)+u(-x)=\frac1{\pi}\int_{\Im(z)=c}\varphi(z)e^{-z^2}\cos(xz)\,dz$$
and
$$i(u(x)-u(-x))=\frac1{\pi}\int_{\Im(z)=c}\varphi(z)e^{-z^2}\sin(xz)\,dz$$
are real valued, so that $u(-x)=\overline{u(x)}$.
Combining this with the trivial estimate $u(x)\ll_c e^{cx}$,
we get $u(x)\ll_c e^{c|x|}$ for all $c<y$.

Together with the Fourier inversion formula
$$\varphi(z)e^{-z^2}=\int_{-\infty}^{\infty}u(x)e^{ixz}\,dx,$$
this shows that $\varphi(z)$ continues to an entire function,
has finite order in any fixed horizontal strip, and satisfies
$\varphi(\bar{z})=\overline{\varphi(z)}$.  Finally, the
Phragm\'en--Lindel\"of convexity principle applied to $\varphi$ on the strip
$\{z\in\C:|\Im(z)|\le1+|y|\}$ shows that $\varphi$ has at most polynomial
growth in horizontal strips.
\end{proof}

\begin{lemma}\label{lem:msymmetry}
Let $(f,K,m)\in\LL$.  Then $m(\bar{z})=m(z)$ for all $z\in\C$.
\end{lemma}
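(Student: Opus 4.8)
The plan is to recast the claim as a uniqueness statement for the distributional identity (A4). The right-hand side of (A4) is manifestly real, so $\sum_{z\in\supp(m)}m(z)h(z)\in\R$ for every $h\in H$. Each $h\in H$ is entire and real on $\R$, so applying the identity theorem to $h$ and $z\mapsto\overline{h(\bar z)}$ gives $h(\bar z)=\overline{h(z)}$ on all of $\C$; since $m$ is real-valued, conjugating the previous relation and then substituting $z\mapsto\bar z$ — legitimate because $\supp(m)$ is discrete with $\sum_{|\Re z|\le T}|m(z)|\ll1+T^{A}$ while $h$ is of rapid decay along horizontal lines, so the sums converge absolutely — yields
$$
\sum_{z}c(z)h(z)=0\qquad\text{for every }h\in H,\qquad\text{where }c(z):=m(z)-m(\bar z).
$$
The function $c$ is real-valued with $c(\bar z)=-c(z)$ (in particular $c\equiv0$ on $\R$); $\supp(c)$ is discrete inside a horizontal strip with $\sum_{|\Re z|\le T}|c(z)|\ll1+T^{A}$; and by the last clause of (A3) one has $c(z)\in\Z$, hence $|c(z)|\ge1$, for all $z$ outside a finite set. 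It remains to show $c\equiv0$.

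Suppose $c\not\equiv0$, and set $y_{0}:=\sup\{\Im z:z\in\supp(c)\}$; since $\supp(c)$ is symmetric about $\R$ and disjoint from it, $y_{0}>0$. Fix a smooth, real, odd, compactly supported $\psi$ with $\psi>0$ on $(0,\infty)$ and put $h_{0}(z)=i\int_{\R}\psi(x)e^{ixz}\,dx$; then $h_{0}\in H$, $h_{0}$ is of rapid decay along horizontal lines, and $h_{0}(i\beta)=-2i\int_{0}^{\infty}\psi(x)\sinh(\beta x)\,dx\neq0$ for every $\beta>0$. The structural fact I would use is that $H$ is stable under multiplication by $\cos(\xi z)$ and by $\sin(\xi z)$ for every $\xi\in\R$, which one checks from the effect of these operations on the compactly supported Fourier dual; also $H$ is stable under real translation, so $\sum_{z}c(z+t)h(z)=0$ for all $h\in H$ and all $t\in\R$. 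Taking $h=h_{0}(z)\cos(\xi z)$ and $h=h_{0}(z)\sin(\xi z)$ in this identity and combining gives, for every $t\in\R$,
$$
\sum_{z}c(z+t)h_{0}(z)e^{i\xi z}=0\qquad\text{for all }\xi\in\R,
$$
the sum converging absolutely because the rapid decay of $h_{0}$ beats the polynomial bound on the density of $\supp(c)$.

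To finish I would extract the coefficient at a point of largest imaginary part. Assume first that $y_{0}$ is attained, say $w_{0}=\sigma_{0}+iy_{0}\in\supp(c)$, and put $t=\sigma_{0}$ and $d(z):=c(z+\sigma_{0})h_{0}(z)$, so that $\supp(d)\subseteq\{\Im z\le y_{0}\}$, $\sum_{z}|d(z)|<\infty$, and $d(iy_{0})=c(w_{0})h_{0}(iy_{0})\neq0$. Multiplying $\sum_{z}d(z)e^{i\xi z}=0$ by $e^{(y_{0}+\alpha)\xi}$ and integrating over $\xi\in(-\infty,0]$ for small $\alpha>0$ — valid by Fubini since $\Im(z-iy_{0})\le0$ on $\supp(d)$, so the inner integrals equal $(\alpha-\Im(z-iy_{0}))^{-1}\le\alpha^{-1}$ — gives
$$
0=\sum_{z}d(z)\,\frac{\alpha}{\alpha+i(z-iy_{0})}.
$$
Each summand is bounded in modulus by $|d(z)|$ (because $|\alpha+i(z-iy_{0})|\ge\alpha$ when $\Im z\le y_{0}$) and tends, as $\alpha\to0^{+}$, to $d(iy_{0})$ if $z=iy_{0}$ and to $0$ otherwise, so dominated convergence forces $d(iy_{0})=0$, a contradiction. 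The case where $y_{0}$ is not attained is dealt with by a limiting version of the same argument, using that by discreteness any $z\in\supp(c)$ with $\Im z$ near $y_{0}$ necessarily has large real part. This non-attained case is the one delicate point — the exponential kernels used to isolate a coefficient grow in the imaginary direction across the strip, which is exactly what the reductions above (to an absolutely convergent pure-exponential identity, and then to the extreme coefficient) are designed to control — while everything else is routine.
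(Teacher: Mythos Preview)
Your setup is sound: deducing $\sum_z c(z)h(z)=0$ for all $h\in H$ from the reality of the right-hand side of (A4), and then exploiting the closure of $H$ under real translation and under multiplication by $\cos(\xi z)$, $\sin(\xi z)$ to upgrade this to $\sum_z c(z+t)h_0(z)e^{i\xi z}=0$ for all real $t,\xi$, all works as stated. Your Laplace-transform extraction of the coefficient at $iy_0$ is also correct when the supremum $y_0$ is actually attained. But the non-attained case is a genuine gap, not a routine variant. If you replace $y_0$ by the height $\beta_0$ of an actual point $w_0\in\supp(c)$, there may be infinitely many $z\in\supp(d)$ with $\Im z>\beta_0$, and for each of those the integral $\int_{-\infty}^{0}e^{(\beta_0+\alpha)\xi}e^{i\xi z}\,d\xi$ diverges, so your Fubini step fails outright. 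Truncating the $\xi$-integral to $[-\Xi,0]$ introduces boundary terms of size $e^{\Xi(\Im z-\beta_0-\alpha)}$, which can be as large as $e^{\Xi(y_0-\beta_0)}$; while it is true that such $z$ have large real part and hence small $|h_0(z)|$, balancing the rapid decay of $h_0$ against this exponential blowup requires a careful quantitative choice of $\Xi$, $\alpha$, and $\beta_0$ that you have not supplied. (Minor aside: a compactly supported $\psi$ cannot satisfy $\psi>0$ on all of $(0,\infty)$; you only need $\psi\ge0$ there and not identically zero, which suffices for $h_0(i\beta)\ne0$.)

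The paper sidesteps this entirely by a different mechanism. It packages the data into a meromorphic function
\[
q(z)=\sum_{z_0\in\supp(m')}\frac{m'(z_0)}{z-z_0}\Bigl(\frac{z}{z_0}\Bigr)^{M-1}
\]
with simple poles of residue $m'(z_0)$, notes that the antisymmetry $m'(\bar z)=-m'(z)$ forces $q(\bar z)=-\overline{q(z)}$, and rewrites $\sum_z m'(z)h(z)=0$ via the residue theorem as $\int_{\Im z=-c}q(z)h(z)\,dz\in i\R$ for all $h\in H$. The analytic-continuation principle of Lemma~\ref{lem:ft} (applied to $iq$) then shows $q$ is entire, hence $m'\equiv0$. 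This argument is uniform in the geometry of $\supp(m')$ and requires no case distinction on whether $y_0$ is attained; the work of ``reaching the boundary of the strip'' is absorbed into Lemma~\ref{lem:ft} once and for all.
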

\begin{proof}
Put $m'(z)=m(z)-m(\bar{z})$. Clearly $m'(z)=0$ for all $z\in\R$, and
we aim to show that this holds for all $z\in\C$.
By axiom (A3), there is a positive integer $M$ such that
$\sum_{z\in\supp(m')}\left|\frac{m'(z)}{z^M}\right|<\infty$.
With this choice of $M$, let
$$
q(z)=\sum_{z_0\in\supp(m')}\frac{m'(z_0)}{z-z_0}
\left(\frac{z}{z_0}\right)^{M-1}.
$$
Then $q$ is meromorphic on $\C$ with at most simple poles,
satisfies $\Res_{z=z_0}q(z)=m'(z_0)$ for all $z_0\in\C$,
and $q(\bar{z})=-\overline{q(z)}$.  Further, setting
$y=\sup\{|\Im(z)|:z\in\supp(m')\}$, $q$ is holomorphic for $\Im(z)<-y$
and has at most polynomial growth in any strip $\{z\in\C:\Im(z)\in[a,b]\}$
with $a<b<-y$.

Next, let $h\in H$.  By axiom (A4), we have
$\sum_{z\in\C}m(z)h(z)\in\R$, and hence
$$
\sum_{z\in\C}m(z)h(z)=\overline{\sum_{z\in\C}m(z)h(z)}
=\sum_{z\in\C}m(z)h(\bar{z})
=\sum_{z\in\C}m(\bar{z})h(z).
$$
Thus, for any $c>y$,
\begin{align*}
0&=\sum_{z\in\C}m'(z)h(z)
=\frac1{2\pi i}\int_{\Im(z)=-c}q(z)h(z)\,dz
-\frac1{2\pi i}\int_{\Im(z)=c}q(z)h(z)\,dz\\
&=\frac1{\pi i}\Re\int_{\Im(z)=-c}q(z)h(z)\,dz,
\end{align*}
i.e.\ $\int_{\Im(z)=-c}q(z)h(z)\,dz\in i\R$.
Hence, by Lemma~\ref{lem:ft} with $\varphi(z)=iq(z)$, $q$ is entire,
and $m'(z)=0$ identically.
\end{proof}

\subsection{Proof of Proposition~\ref{prop:acfe}}
Set $\mu=-2\lim_{x\to0^+}\frac{d}{dx}(xK(x))$ and
$K_1(x)=K(x)-\frac{d_F}{4\sinh(x/2)}+\frac{\mu}{2\cosh(x/2)}$.
Then $K_1(x)/x$ extends to a Schwartz function on $\R$, so
\begin{equation}\label{eq:kdef}
k(z)=i\int_0^\infty\frac{K_1(x)}{x}e^{-ixz}\,dx
\end{equation}
is well defined for $\Im(z)\le0$ and holomorphic for $\Im(z)<0$.
For any $n\ge0$ we have
$$
k^{(n)}(z)=\int_0^\infty K_1(x)(-ix)^{n-1}e^{-ixz}\,dx
$$
for $\Im(z)<0$, and by a standard argument based on Lebesgue's dominated
convergence theorem, this extends continuously to $\Im(z)\le0$.

Next let $h\in H$ with Fourier transform $g$.
By Plancherel's theorem, we have
$$\int_0^\infty K_1(x)g(x)\,dx=\frac1{2\pi}\int_\R k'(t)h(t)\,dt
=\frac1{2\pi}\int_{\Im(z)=-c}k'(z)h(z)\,dz$$
for any $c\ge0$.  Together with the identities
$$
\int_0^\infty\frac{g(0)-g(x)}{2\sinh(x/2)}\,dx
=g(0)\log(4e^\gamma)+\frac1{2\pi}\int_\R
\frac{\Gamma'}{\Gamma}\!\left(\frac12+it\right)h(t)\,dt
$$
and
$$
\int_0^\infty\frac{g(0)-g(x)}{2\cosh(x/2)}\,dx
=g(0)\frac{\pi}2+\frac1{2\pi}\int_\R\frac12\left[
\frac{\Gamma'}{\Gamma}\!\left(\frac14+\frac{it}2\right)
-\frac{\Gamma'}{\Gamma}\!\left(\frac34+\frac{it}2\right)
\right]h(t)\,dt,
$$
this yields
\begin{align*}
\Re\int_0^\infty K(x)(g(0)-g(x))\,dx
=\frac1{2\pi}\Re\int_{\Im(z)=-c}\varphi(z)h(z)\,dz,
\end{align*}
where
$$
\varphi(z)=
\frac{d_F}{2}\frac{\Gamma'}{\Gamma}\!\left(\frac12+iz\right)
+\frac{\mu}2\left[\frac{\Gamma'}{\Gamma}\!\left(\frac34+\frac{iz}2\right)
-\frac{\Gamma'}{\Gamma}\!\left(\frac14+\frac{iz}2\right)\right]
-k'(z)+C
$$
and
$C=\frac{d_F}{2}\log(4e^\gamma)+\Re\bigl(k'(0)-\frac{\pi\mu}2\bigr)$.

Let $\omega\in\C^\times$ be a constant of modulus $1$, to be
determined below, and set
\begin{equation}\label{eq:gammaFdef}
\gamma_F(s)=\omega\Gamma(s)^{\frac{d_F}2}
\left(\frac{\Gamma((s+1)/2)}{\Gamma(s/2)}\right)^{\mu}
\exp\!\bigl[(C-f(1))(s-\tfrac12)-ik\bigl(-i(s-\tfrac12)\bigr)\bigr],
\end{equation}
where for any $z\in\C$ we define
$\Gamma(s)^z=\exp(z\log\Gamma(s))$ for $\Re(s)>0$ using the principal
branch of $\log\Gamma$. Note that
$\frac{\gamma_F'}{\gamma_F}(\frac12+iz)=\varphi(z)-f(1)$,
and by the above we see that
$\log\gamma_F(s)$ and all of its derivatives are holomorphic for
$\Re(s)>\frac12$ and extend continuously to $\Re(s)\ge\frac12$, which
establishes (i).

To see (ii), we rewrite \eqref{eq:kdef} in the form
$$
k(z)=i\int_0^\infty\frac{K_1(x)}{x}e^{x/2}e^{-(\frac12+iz)x}\,dx
$$
and apply integration by parts repeatedly to see that
$$
k(z)=\sum_{j=1}^{n-1}\frac{c_j}{(\frac12+iz)^j}
+O_n\bigl(|\tfrac12+iz|^{-n}\bigr)
$$
for some constants $c_j$. Using this together with Stirling's formula in 
\eqref{eq:gammaFdef} yields (ii).

Next let $\Lambda_F(s)=\gamma_F(s)L_F(s)$ and
$\Phi(z)=\Lambda_F(\frac12+iz)$. Then $\Phi(z)$ is
analytic for $\Im(z)<-\frac12$, where it satisfies
$$
-i\frac{\Phi'}{\Phi}(z)=\varphi(z)-\sum_{n=1}^{\infty}\frac{f(n)}{n^{iz}}.
$$
Thus, for any $c>\frac12$ we have
\begin{equation}\label{eq:Phiint}
\begin{aligned}
\frac1{\pi}\Im\int_{\Im(z)=-c}\frac{\Phi'}{\Phi}(z)h(z)\,dz
&=2\Re\left[\frac1{2\pi}\int_{\Im(z)=-c}\varphi(z)h(z)\,dz
-\sum_{n=1}^{\infty}\frac{f(n)}{2\pi}\int_{\Im(z)=-c}h(z)n^{-iz}\,dz\right]\\
&=2\Re\left[\int_0^\infty K(x)(g(0)-g(x))\,dx
-\sum_{n=1}^{\infty}f(n)g(\log{n})\right].
\end{aligned}
\end{equation}

As in the proof of the Lemma~\ref{lem:msymmetry}, we define
$$
q(z)=\frac{m(0)}{z}+\sum_{z_0\in\supp(m)\setminus\{0\}}
\frac{m(z_0)}{z-z_0}
\left(\frac{z}{z_0}\right)^{M-1}
$$
for a suitable positive integer $M$. Lemma~\ref{lem:msymmetry} shows that
$m(\bar{z})=m(z)$, so that $q(\bar{z})=\overline{q(z)}$.
Let $y=\sup\{|\Im(z)|:z\in\supp(m)\}$ and $h\in H$ with Fourier
transform $g$. Then for any $c>\max(y,\frac12)$, we have
\begin{align*}
\sum_{z\in\C}m(z)h(z)
&=\frac1{2\pi i}\int_{\Im(z)=-c}q(z)h(z)\,dz
-\frac1{2\pi i}\int_{\Im(z)=c}q(z)h(z)\,dz\\
&=\frac1{\pi}\Im\int_{\Im(z)=-c}q(z)h(z)\,dz.
\end{align*}
On the other hand, by axiom (A4) and \eqref{eq:Phiint},
this equals
$$
2\Re\left[\int_0^\infty K(x)(g(0)-g(x))\,dx
-\sum_{n=1}^\infty f(n)g(\log{n})\right]=
\frac1{\pi}\Im\int_{\Im(z)=-c}\frac{\Phi'}{\Phi}(z)h(z)\,dz.
$$
Thus, $\int_{\Im(z)=-c}\left(\frac{\Phi'}{\Phi}(z)-q(z)\right)h(z)\,dz$
is real valued, so by Lemma~\ref{lem:ft} it follows that
$\frac{\Phi'}{\Phi}(z)-q(z)$ continues to an entire function
with at most polynomial growth in horizontal strips, and we
have the functional equation
$\frac{\Phi'}{\Phi}(\bar{z})=\overline{\frac{\Phi'}{\Phi}(z)}$.
This yields the residue formula
$$
m(z_0)=\Res_{z=z_0}q(z)=\Res_{z=z_0}\frac{\Phi'}{\Phi}(z)
=\Res_{s=\frac12+iz_0}\frac{\Lambda_F'}{\Lambda_F}(s)
$$
and establishes (v).

Define
$$
l(z)=
\begin{cases}
\log(z-i)+\log(z+i)&\text{if }\Re(z)>0\text{ or }|\Im(z)|>1,\\
\log(z^2+1)+2\pi i&\text{if }\Re(z)<0\text{ and }\Im(z)>0,\\
\log(z^2+1)-2\pi i&\text{if }\Re(z)<0\text{ and }\Im(z)<0,
\end{cases}
$$
where each $\log$ refers to the principal branch.
One can check that the definitions agree where they overlap, so
$l$ is analytic on $\C\setminus\bigl((-\infty,0]\cup i[-1,1]\bigr)$
and satisfies $\exp(l(z))=z^2+1$.
For any $z_0\in\C$ we set
$$
l_{z_0}(z)=
\begin{cases}
2\log(z-z_0)&\text{if }z_0\in\R,\\
2\log|\Im(z_0)|+l\!\left(\frac{z-\Re(z_0)}{|\Im(z_0)|}\right)
&\text{if }z_0\notin\R,
\end{cases}
$$
so that $l_{z_0}$ is analytic for
$z-\Re(z_0)\in\C\setminus\bigl((-\infty,0]\cup i[-|\Im(z_0)|,|\Im(z_0)|]\bigr)$
and satisfies $\exp(l_{z_0}(z))=(z-z_0)(z-\overline{z_0})$.

Next let $\Omega$ be as in the statement of the proposition and define
\begin{align*}
Q(z)=z^{m(0)}
\prod_{z_0\in\supp(m)\setminus\{0\}}
\exp\left(m(z_0)\sum_{n=1}^{M-1}\frac{(z/z_0)^n}n\right)
\begin{cases}
\exp\bigl(\tfrac12m(z_0)l_{z_0}(z)\bigr)&\text{if }m(z_0)\notin\Z,\\
(z-z_0)^{m(z_0)}&\text{if }m(z_0)\in\Z,
\end{cases}
\end{align*}
where $z^{m(0)}$ means $\exp(m(0)\log{z})$ if $m(0)\notin\Z$.
Since $m(\bar{z})=m(z)$ for all $z$, we see that $Q$ is meromorphic on
$\{z\in\C:\frac12+iz\in\Omega\}$ and satisfies $\frac{Q'}{Q}(z)=q(z)$
in that region. By the above we conclude that $\Phi(z)/Q(z)$ continues
to an entire, non-vanishing function of finite order. It follows that
$\Lambda_F(s)$ continues meromorphically to $\Omega$ and has meromorphic
finite order, which establishes (iii).

Since $\Omega$ is simply connected, by
integrating the functional equation for
$\frac{\Lambda_F'}{\Lambda_F}(s)$ we get
$\Lambda_F(s)=c\omega^2\overline{\Lambda_F(1-\bar{s})}$ for some constant
$c\in\C^\times$. Consideration of this equation for $\Re(s)=\frac12$
shows that $|c|=1$, and we choose $\omega$ to satisfy $c\omega^2=1$.
This establishes (iv).

It remains only to see that $\gamma_F$ is unique up to multiplication by
a non-zero real scalar.
To that end, suppose that $\tilde\gamma_F(s)$
is another function with the same properties, and consider the ratio
$r(s)=\frac{\tilde\gamma_F(s)}{\gamma_F(s)}$.
Since we may also write $r(s)$ in the form
$\frac{\tilde\gamma_F(s)L_F(s)}{\gamma_F(s)L_F(s)}$, it follows from
(iv) and (v) that
$\frac{r'}{r}(s)$ has meromorphic continuation to $\C$ and satisfies
$\frac{r'}{r}(s)=-\overline{\frac{r'}{r}(1-\bar{s})}$. By
(i), $\frac{r'}{r}(s)$ is continuous on $\Re(s)\ge\frac12$,
so it must be entire. Therefore, $r(s)$ is entire and non-vanishing, and
satisfies $r(s)=\overline{r(1-\bar{s})}$.
Further, by (ii), there are numbers $d',c'_{-1}\in\R$
and $\mu',c'_0\in\C$ such that
$r(s)=\exp\bigl((\frac{d'}2\log\frac{s}e+c'_{-1})(s-\frac12)
+\frac{\mu'}2\log\frac{s}e+c'_0\bigr)\bigl(1+O(|s|^{-1})\bigr)$
for $\Re(s)\ge\frac12$.
The functional equation implies that $r(\frac12+it)\in\R$ for all
$t\in\R$, and taking $t\to\infty$ we conclude that
$d'=c'_{-1}=\Im(\mu')=0$.
Together with the functional equation, this implies that
$r(s)\ll(1+|s-\frac12|)^{\mu'}$, and thus $r$ is a polynomial.
Since it does not vanish, it must be a non-zero constant, and invoking
the functional equation once more, it is real valued.

\subsection{Proof of Theorem~\ref{thm:multone}}
Clearly
$(\mathrm{i})\Longrightarrow(\mathrm{ii})\Longrightarrow(\mathrm{iii})$,
and we will show that
$(\mathrm{iii})\Longrightarrow(\mathrm{iv})\Longrightarrow(\mathrm{v})
\Longrightarrow(\mathrm{i})$.
Beginning with (iii), 
suppose that $F=(f,K,m)\in\LL$ satisfies
$\sum_{n=1}^\infty\frac{|a_F(n)|}{\sqrt{n}}<\infty$.
Then $L_F(s)$ is holomorphic for $\Re(s)>\frac12$ and extends
continuously to $\Re(s)\ge\frac12$.
By Proposition~\ref{prop:acfe},
$\Lambda_F(\frac12+it)=\gamma_F(\frac12+it)L_F(\frac12+it)$ is
well defined and real valued for all sufficiently large $t>0$.
Thus, we have
$$
L_F(\tfrac12+it)=\overline{L_F(\tfrac12+it)}
e^{-2i\arg\gamma_F(\frac12+it)},
$$
where $\arg\gamma_F(\frac12+it)$ denotes the continuous
extension of $\Im\log\gamma_F(s)$ to $\Re(s)=\frac12$.

For a fixed positive integer $m$ we multiply both sides
of this equation by
$m^{it}$ and take the average over $t\in[T,2T]$ as $T\to\infty$.
On the left-hand side, thanks to the absolute convergence of
$\sum a_F(n)/\sqrt{n}$,
this is $a_F(m)/\sqrt{m}+o(1)$, while
on the right-hand side we get
$$
\frac1T\int_T^{2T}\sum_{n=1}^\infty\frac{\overline{a_F(n)}}{\sqrt{n}}
e^{it\log(mn)-2i\arg\gamma_F(\frac12+it)}\,dt
=\frac1T\int_T^{2T}\sum_{n=1}^\infty\frac{\overline{a_F(n)}}{\sqrt{n}}
e^{i(t\log(mn)-\varphi(t)+O(1/t))}\,dt,
$$
where, by Proposition~\ref{prop:acfe}(ii),
$\varphi(t)=d_Ft\log(t/e)+2c_{-1}t+\Im(\mu)\log(t/e)+\theta$,
for some constants $c_{-1},\theta\in\R$ and $\mu\in\C$.
We may ignore the $O(1/t)$ error term at the cost of $o(1)$.
Also, since the sum over $n$ is absolutely convergent, 
the terms for $n>\log{T}$ contribute only $o(1)$, so we have
$$
\frac{a_F(m)}{\sqrt{m}}=o(1)+
\frac1T\int_T^{2T}\sum_{n\le\log{T}}\frac{\overline{a_F(n)}}{\sqrt{n}}
e^{i(t\log(mn)-\varphi(t))}\,dt.
$$

If $d_F\ne0$ then it is easy to see that $t\log(mn)-\varphi(t)$ has no
stationary points in $[T,2T]$ for sufficiently large $T$, and an
application of integration by parts shows that the right-hand side is
$o(1)$. To avoid a contradiction for $m=1$, we must have $d_F=0$.
The main term of $t\log(mn)-\varphi(t)$ is thus $(\log(mn)-2c_{-1})t$,
so there is still no stationary phase unless $mn=e^{2c_{-1}}$. In
particular, $e^{2c_{-1}}$ must be a positive integer, say $q$, and we
have $a_F(m)=0$ unless $m|q$. Thus, $L_F(s)$ is a Dirichlet polynomial,
which clearly implies (iv).

Next, assuming (iv), $a_F(n)$ is supported on the $y$-smooth numbers
$$\{n\in\Z_{>0}:p\text{ prime and }p|n\Longrightarrow p\le y\}$$
for some $y>0$. Taking the logarithm, the same conclusion applies to
$f(n)$.
By axiom (A1), we have the estimate $f(n)\log^k{n}\ll_k1$ for every $k$,
and applying this with $k=\pi(y)+1$, we see that
$\frac{L_F'}{L_F}(s)=-\sum_{n=2}^\infty f(n)n^{\frac12-s}$ is
holomorphic for $\Re(s)>\frac12$ and continuous on the boundary.
Hence this applies to $\frac{\Lambda_F'}{\Lambda_F}(s)=
\frac{\gamma_F'}{\gamma_F}(s)+\frac{L_F'}{L_F}(s)$ as well, and together
with the functional equation, this implies that $\Lambda_F(s)$ is entire
and non-vanishing. Therefore $m(z)=0$ identically, which implies (v).

Finally, let us assume (v), and set
$N(t)=\sum_{\substack{z\in\supp(m)\\|\Re(z)|\le{t}}}|m(z)|$ for $t\ge0$.
Then by hypothesis, there is a
function $\varepsilon(T)$ such that
$\lim_{T\to\infty}\varepsilon(T)=0$ and
$N(t)\le\varepsilon(T)t$ for all $t\ge T$.
We fix a test function $g_0$ which is
non-negative, even, smooth, supported on $[-1,1]$, satisfies $g_0(0)=1$,
and has Fourier transform $h_0\in H$.
For $\theta\in\R$, $T>0$ and $x_0>0$, we consider axiom (A4) applied to
$g(x)=e^{i\theta}g_0(T(x-x_0))+e^{-i\theta}g_0(T(x+x_0))$, with Fourier
transform $h(z)=2T^{-1}\cos(\theta+x_0z)h_0(T^{-1}z)$.
If we choose
$x_0=\log{n}$ for some integer $n\ge2$ and pick $\theta$ so that
$e^{i\theta}f(n)\in\R_{\ge0}$, then it is straightforward to
see that
\begin{equation}\label{eq:2fn}
2\Re\left[\int_0^\infty{K(x)}(g(0)-g(x))\,dx
-\sum_{n=1}^{\infty}f(n)g(\log n)\right]
=-2|f(n)|+o(1)
\end{equation}
as $T\to\infty$.

On the other hand, by axiom (A4) this equals
$$
\frac2T
\sum_{z\in\supp(m)}m(z)\cos(\theta+z\log{n})h_0(T^{-1}z).
$$
Since $g_0$ is smooth, $h_0$ decays rapidly in horizontal strips;
in particular, for $T\ge1$,
$$|\cos(\theta+z\log{n})h_0(T^{-1}z)|\le
C_n(1+|\Re(T^{-1}z)|)^{-2}\quad\text{for }|\Im(z)|\le\frac12$$
holds from some $C_n>0$ depending only on $n$.
Further, since $N(t)$ is non-decreasing, we have
$N(t)\le\varepsilon(T)\max(t,T)$ for all $t\ge0$. Thus, for $T\ge1$ we
get
\begin{align*}
\biggl|\frac2T&
\sum_{z\in\supp(m)}m(z)\cos(\theta+z\log{n})h_0(T^{-1}z)\biggr|
\le\frac{2C_n}T\int_0^\infty\left(1+\frac{t}{T}\right)^{-2}\,dN(t)\\
&=\frac{4C_n}{T^2}\int_0^\infty\left(1+\frac{t}{T}\right)^{-3}N(t)\,dt
\le\frac{4C_n\varepsilon(T)}{T^2}\int_0^\infty\left(1+\frac{t}{T}\right)^{-3}
\max(t,T)\,dt\\
&=3C_n\varepsilon(T)=o(1).
\end{align*}

Together with \eqref{eq:2fn}, this shows that $f(n)=0$ for all $n\ge2$.
In particular, $L_F(s)=1$, which implies (iv); in turn, as we saw above,
this implies that $m(z)=0$ identically.  Therefore,
$$
\Re\biggl[\int_0^\infty K(x)(g(0)-g(x))\,dx-f(1)g(0)\biggr]=0.
$$
for every suitable test function $g$. Choosing $g$ as above for an
arbitrary $x_0>0$ and $\theta$ so that $e^{i\theta}K(x_0)\in\R_{\ge0}$,
we see that $K(x_0)=0$. Finally, $f(1)\in\R$ by axiom (A1), so we have
$f(1)=0$.  Thus, (i) holds and this completes the proof.

\section{Proof of Theorem~\ref{thm:converse}}
Let $F\in\LL_d^+$ for some $d<2$.
The main object of study in the method initiated by Conrey and Ghosh
\cite{cg} is the exponential sum
$$
S_F(z)=\sum_{n=1}^\infty a_F(n)e(nz),
$$
defined for $z\in\{x+iy\in\C:y>0\}$.
For $k\in\Z$ we write
$$
S_F^{(k)}(z)=\sum_{n=1}^\infty a_F(n)(2\pi in)^ke(nz).
$$
Note that this is just the $k$th derivative for $k\ge0$.
Since $S_F(z)$ is periodic and decays exponentially as
$\Im(z)\to\infty$, it will be enough to consider $z$ in a box
$$
B=\{z=-x+iy\in\C:x\in[x_1,x_2], y\in(0,y_1]\},
$$
for fixed $x_2>x_1>0$ and $y_1>0$ to be specified later.

\begin{lemma}\label{lem:SFkint}
For $F$ as above, let $c_{-1}\in\R$ and $\mu\in\C$
be the constants given by Proposition~\ref{prop:acfe}, and define
$$
G(s)=\bigl(2\pi(1-\tfrac{d}2)^{1-\frac{d}2}e^{c_{-1}}\bigr)^{\frac12-s}
\Gamma\!\left(\left(1-\frac{d}2\right)\left(s-\frac12\right)
+\frac{1-\mu}2\right).
$$
Then
for any integer $k\ge0$ there are constants $c_{kj}\in\C$, $0\le
j\le k$, with $c_{kk}\ne0$, such that for any
$\sigma>\max\bigl(1,\frac12+\frac{\Re(\mu)-1}{2-d}\bigr)$
and $\varepsilon>0$,
$$
z^kS_F^{(k)}(z)=
O_{B,k,\varepsilon}(\Im(z)^{-\varepsilon})+\sum_{j=0}^k\frac{c_{kj}}{2\pi i}
\int_{\Re(s)=\sigma}\Lambda_F(s)
G\!\left(s+\frac{2j}{2-d}\right)(-iz)^{-s}\,ds,
$$
uniformly for $z\in B$.
\end{lemma}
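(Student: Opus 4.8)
The plan is to represent $z^kS_F^{(k)}(z)$ as a Mellin--Barnes integral, replace the resulting $\Gamma$-quotient by a finite combination of the functions $G(s+\tfrac{2j}{2-d})$, and estimate the remainder. \emph{Step 1 (Mellin--Barnes representation).} Applying the classical formula $e^{-w}=\frac1{2\pi i}\int_{\Re(s)=c}\Gamma(s)w^{-s}\,ds$ (valid for $\Re(w)>0$, $c>0$) to $w=-2\pi inz$, which has positive real part for $\Im(z)>0$, then summing over $n$ against $L_F(s)=\sum_n a_F(n)n^{-s}$ and interchanging (legitimate for $c$ large, since $a_F(n)$ is of polynomial growth by axiom (A1)), shifting $s\mapsto s+k$, simplifying the constants, and writing $L_F=\Lambda_F/\gamma_F$, I would obtain, for any $\sigma'>1$,
$$
z^kS_F^{(k)}(z)=\frac{(-1)^k}{2\pi i}\int_{\Re(s)=\sigma'}
\Lambda_F(s)\,\frac{\Gamma(s+k)(2\pi)^{-s}}{\gamma_F(s)}\,(-iz)^{-s}\,ds,
$$
where $(-iz)^{-s}$ is the principal branch ($-iz$ lies in the right half-plane). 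Since $\gamma_F$ has no zeros or poles and $L_F$ is a convergent exponential for $\Re(s)>1$ by Proposition~\ref{prop:acfe}, the integrand is holomorphic there, so the contour may be moved to any $\Re(s)=\sigma$ in the stated range without crossing a pole.

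\emph{Step 2 (the $\Gamma$-quotient).} Set $R(s):=\frac{\Gamma(s+k)(2\pi)^{-s}}{\gamma_F(s)G(s)}$. Combining Stirling's formula for $\Gamma(s+k)$ and for the $\Gamma$-factor inside $G$ with the expansion of $\log\gamma_F$ from Proposition~\ref{prop:acfe}(ii), I would check that the coefficients of $s\log s$ and of $s$ cancel identically and that the coefficient of $\log s$ equals the \emph{constant} $k$ --- this is precisely why $G$ is normalised using $d$, $c_{-1}$, $\mu$ --- so that $R(s)$ has a uniform asymptotic expansion $R(s)\sim s^k(b_0+b_1/s+b_2/s^2+\cdots)$ for $\Re(s)\ge\sigma$ with $b_0\ne0$. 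On the other hand $\frac{G(s+\frac{2j}{2-d})}{G(s)}$ is a polynomial in $s$ of exact degree $j$ (a product of $j$ linear factors, times a nonzero constant since $d<2$), so these form a basis of the polynomials of degree $\le k$; I would define $c_{k0},\dots,c_{kk}$ (with the sign $(-1)^k$ absorbed) so that $\sum_j c_{kj}\frac{G(s+\frac{2j}{2-d})}{G(s)}$ equals $(-1)^k$ times the polynomial part $b_0s^k+\cdots+b_k$ of $R$, getting $c_{kk}\ne0$ by comparing leading coefficients. Splitting the integral of Step 1 along $\frac{\Gamma(s+k)(2\pi)^{-s}}{\gamma_F(s)}=(-1)^k\sum_j c_{kj}G(s+\frac{2j}{2-d})+E(s)$, with $E(s)=(-1)^kG(s)\bigl(R(s)-(b_0s^k+\cdots+b_k)\bigr)=(-1)^kG(s)\cdot O(|s|^{-1})$, produces the asserted main term plus the error integral $\frac{(-1)^k}{2\pi i}\int_{(\sigma)}\Lambda_F(s)E(s)(-iz)^{-s}\,ds$; the hypothesis $\sigma>\tfrac12+\tfrac{\Re(\mu)-1}{2-d}$ is used here to keep the contour to the right of all poles of the $G(s+\tfrac{2j}{2-d})$, so that $G$ and $E$ are defined on it. Since none of $z^kS_F^{(k)}(z)$, the main term, or the error depends on the choice of $\sigma$ in the admissible range, it suffices to bound the error for one convenient $\sigma$.

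\emph{Step 3 (the error, the main obstacle).} On $\Re(s)=\sigma$ one has $\Lambda_F(s)E(s)=L_F(s)\bigl(\Gamma(s+k)(2\pi)^{-s}-\gamma_F(s)G(s)(b_0s^k+\cdots+b_k)\bigr)=L_F(s)\Gamma(s+k)(2\pi)^{-s}\,O(|s|^{-k-1})$, whose modulus is $\ll(1+|t|)^{\sigma-3/2}e^{-\pi|t|/2}$; but $|(-iz)^{-s}|=|-iz|^{-\sigma}e^{t\arg(-iz)}$ and $\arg(-iz)\to\tfrac{\pi}2$ as $\Im(z)\to0$, so a bound by absolute values alone only yields $O(\Im(z)^{-(\sigma-1/2)})$, which is far too weak. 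The point is to exploit oscillation: the phase of $\Lambda_F(s)E(s)(-iz)^{-s}$ along the line has derivative $\log\!\bigl(|t|/(2\pi|-iz|)\bigr)+O(1)$, which is unbounded as $|t|\to\infty$, so after isolating a fixed bounded range $|t|\le T_0$ (with $T_0$ depending only on $B$, treated trivially) I would integrate by parts repeatedly on the tail, letting the number of integrations grow like $\log(1/\Im(z))$; each integration multiplies the amplitude near its peak $|t|\asymp\Im(z)^{-1}$ by a factor $\ll\Im(z)/\log(1/\Im(z))$, which outweighs the loss $\Im(z)^{-(\sigma-1/2)}$ and gives a bound $O_{B,k}(1)$, a fortiori $O_{B,k,\varepsilon}(\Im(z)^{-\varepsilon})$, uniformly for $z\in B$ (here $|-iz|\asymp1$ and $\tfrac{\pi}2-\arg(-iz)\asymp\Im(z)$ on $B$, which is why the box is kept away from the imaginary axis). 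Equivalently, one may deform the error integral to the steepest-descent contour through the saddle $s_0=2\pi(-iz)$, which crosses only finitely many poles of $\Lambda_F$ and of the $G(s+\tfrac{2j}{2-d})$, each contributing $O_B(1)$. Assembling Steps 1--3 gives the lemma; the main difficulty throughout is precisely this error estimate, since the amplitude decays only barely faster than $(-iz)^{-s}$ grows.
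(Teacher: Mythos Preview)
Your Steps 1--2 are essentially correct and amount to the transpose of the paper's argument: the paper expands $\gamma_F(s)G(s+\delta_k)$ in the basis $(2\pi)^{-s}\Gamma(s+j)$ and inverts a triangular system, whereas you expand $(2\pi)^{-s}\Gamma(s+k)/\gamma_F(s)$ in the basis $G(s+\delta_j)$. Either direction works in principle.

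The gap is in Step 3. Your two proposed fixes are both problematic. For the iterated integration by parts, the number of integrations is to grow like $\log(1/\Im(z))$, so you need uniformity of the implied constants in that count; since each integration differentiates the amplitude (which involves $L_F$, $\gamma_F^{-1}$ and the tail of an asymptotic series), the constants typically grow at least geometrically, and you have not checked that this is beaten by the saving per step. For the steepest-descent deformation, the saddle $s_0=2\pi(-iz)$ has $\Re(s_0)=2\pi\Im(z)\to0$, so the contour must enter $\Re(s)<\tfrac12$; but $\gamma_F(s)$, and hence $L_F(s)=\Lambda_F(s)/\gamma_F(s)$ appearing in your error term, is only defined for $\Re(s)\ge\tfrac12$ by Proposition~\ref{prop:acfe}(i), so the integrand is not even defined along the deformed contour.

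The paper avoids all of this by a much simpler device: carry the asymptotic expansion two steps further, i.e.\ expand
\[
\gamma_F(s)G\!\left(s+\tfrac{2k}{2-d}\right)
=\sum_{j=-2}^{k}\gamma_{kj}(2\pi)^{-s}\Gamma(s+j)
+O_k(|s|^{-1})(2\pi)^{-s}\Gamma(s-2),
\]
and shift the error integral to the contour $\Re(s)=\tfrac94$. There the error integrand satisfies
$\Lambda_F(s)R_k(s)(-iz)^{-s}\ll_{B,k}|s|^{-5/4}$ uniformly for $z\in B$, because $|\Gamma(s-2)(2\pi)^{-s}|\cdot|(-iz)^{-s}|\ll_B|t|^{-1/4}$ on that line (the exponentials $e^{-\pi|t|/2}$ from Stirling and $e^{t\arg(-iz)}$ from $(-iz)^{-s}$ combine to a factor $\le1$). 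Thus the error is $O_{B,k}(1)$ with no oscillatory argument at all. The extra terms $j=-1,-2$ produce, via Mellin inversion, $z^{-1}S_F^{(-1)}(z)$ and $z^{-2}S_F^{(-2)}(z)$, which are bounded directly by $O_{B,\varepsilon}(\Im(z)^{-\varepsilon})$ and $O_B(1)$ using only the absolute convergence of $\sum|a_F(n)|n^{-\sigma}$ for $\sigma>1$; this is the true origin of the $\Im(z)^{-\varepsilon}$ in the statement. In your language, the fix is to approximate $R(s)$ not by its polynomial part $b_0s^k+\cdots+b_k$ but by the Laurent polynomial down to $b_{k+2}s^{-2}$, and to recognise the two extra integrals as antiderivatives of $S_F$.
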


\begin{proof}
By Proposition~\ref{prop:acfe}(ii) and Stirling's formula, we find that
there are constants $\alpha_0,\alpha_1,\ldots\in\C$ such that
\begin{align*}
\log\frac{\gamma_F(s)G(s)}{(2\pi)^{-s}\Gamma(s)}
=\sum_{j=0}^{n-1}\frac{\alpha_j}{s^j}+O_n\bigl(|s|^{-n}\bigr),
\end{align*}
uniformly on
$\{s\in\C:\Re(s)\ge\frac12\}\cap
\{s\in\C:\Re(s)\ge\frac12+\frac{\Re(\mu)}{2-d}
\text{ or }|\Im(s)-\frac{\Im(\mu)}{2-d}|\ge1\}$.
We take the exponential to get
$$
\gamma_F(s)G(s)
=(2\pi)^{-s}\Gamma(s)\left(\sum_{j=0}^{n-1}\frac{\beta_j}{s^j}
+O_n\bigl(|s|^{-n}\bigr)\right)
$$
for some constants $\beta_j\in\C$, with $\beta_0\ne0$.

Next we fix $k\ge0$, take $n=k+3$ in the above, and multiply by
$G(s+\frac{2k}{2-d})/G(s)$ to get
\begin{align*}
\gamma_F(s)G\!\left(s+\frac{2k}{2-d}\right)
&=(2\pi)^{-s}\Gamma(s+k)
\frac{G\bigl(s+\frac{2k}{2-d}\bigr)\Gamma(s)}
{G(s)\Gamma(s+k)}
\left(\sum_{j=0}^{k+2}\frac{\beta_j}{s^j}+O_k\bigl(|s|^{-k-3}\bigr)\right)\\
&=\sum_{j=-2}^k\gamma_{kj}
(2\pi)^{-s}\Gamma(s+j)+O_k\bigl(|s|^{-1})(2\pi)^{-s}\Gamma(s-2)
\end{align*}
for some $\gamma_{kj}\in\C$ with $\gamma_{kk}\ne0$,
uniformly on the set
\begin{equation}\label{eq:94contour}
\{s\in\C:\Re(s)\ge\tfrac94\}\cap
\{s\in\C:\Re(s)\ge\tfrac12+\tfrac{\Re(\mu)+2}{2-d}
\text{ or }|\Im(s)-\tfrac{\Im(\mu)}{2-d}|\ge1\}.
\end{equation}

Fix $\sigma>\max\bigl(1,\frac12+\frac{\Re(\mu)-1}{2-d}\bigr)$,
and consider the integral
$\frac1{2\pi i}\int_{\Re(s)=\sigma}\Lambda_F(s)
G(s+\frac{2k}{2-d})(-iz)^{-s}\,ds$.
Shifting the contour to the right if necessary, we may assume without
loss of generality that $\sigma\ge\frac94$.
We write
$M_k(s)=\gamma_F(s)^{-1}\sum_{j=-2}^k\gamma_{kj}(2\pi)^{-s}\Gamma(s+j)$,
$R_k(s)=G(s+\frac{2k}{2-d})-M_k(s)$, and split the integral
accordingly.
For the integral against $R_k(s)$,
we shift the contour to the boundary
of \eqref{eq:94contour}, on which we have the estimate
$\Lambda_F(s)R_k(s)(-iz)^{-s}\ll_{B,k}|s|^{-5/4}$
for all $z\in B$.
Thus, this integral contributes $O_{B,k}(1)$.

As for the main term, for each $j\ge-2$, Mellin inversion gives
$$
\frac1{2\pi i}\int_{\Re(s)=\sigma}L_F(s)(-2\pi iz)^{-s}\Gamma(s+j)\,ds
=\sum_{n=1}^{\infty}a_F(n)(-2\pi inz)^je(nz)
=(-z)^jS_F^{(j)}(z).
$$
Since $\sum_{n=1}^\infty|a_F(n)|n^{-\sigma}$ converges for every
$\sigma>1$, we see that
$z^{-2}S_F^{(-2)}(z)\ll_B 1$
and $z^{-1}S_F^{(-1)}(z)\ll_{B,\varepsilon}\Im(z)^{-\varepsilon}$.
Thus, altogether we have
$$
\frac1{2\pi i}\int_{\Re(s)=\sigma}
\Lambda_F(s)G\!\left(s+\frac{2k}{2-d}\right)(-iz)^{-s}\,ds
=O_{B,k,\varepsilon}(\Im(z)^{-\varepsilon})
+\sum_{j=0}^k(-1)^j\gamma_{kj}z^jS_F^{(j)}(z)
$$
for $z\in B$.
Note that this system of equations is triangular, with non-zero diagonal
coefficients $(-1)^k\gamma_{kk}$.
The lemma follows on multiplying by the inverse matrix.
\end{proof}

For integers $k,\ell\ge0$ we define
$\delta_k=\frac{2k}{2-d}$ and
$\sigma_\ell=\frac12+\frac{2\ell-\Re(\mu)}{2-d}$.
Assume that $\ell$ is such that
$\sigma_\ell>\max\bigl(1,\frac12+\frac{\Re(\mu)-1}{2-d}\bigr)$;
then all poles of $G(s+\delta_k)$ lie to the left of $\Re(s)=\sigma_\ell$
and avoid the line $\Re(s)=1-\sigma_\ell$.
Since $F$ is positive, there is a number $T\in\R$ such that
$\Lambda_F(s)$ is holomorphic for $\Im(s)\ge T$.
Fix such a $T$ and let $\Gamma_\ell$ be the boundary of the region
$\{s\in\C:\Re(s)\in[1-\sigma_\ell,\sigma_\ell], \Im(s)\le T\}$,
with counterclockwise orientation. For $z\in B$,
$|z|\asymp_B 1$ and $\arg{z}-\frac{\pi}2\gg_B 1$, so
it follows from the identity
$$|(-iz)^{-s}|=|z|^{-\Re(s)}\exp\bigl((\arg{z}-\tfrac\pi2)\Im(s)\bigr),$$
the functional equation
$\Lambda_F(s)=\overline{\Lambda_F(1-\bar{s})}$ and Stirling's formula
that
$$
\Lambda_F(s)G(s+\delta_k)(-iz)^{-s}\ll_{B,k,\ell} e^{\frac{\pi}2\Im(s)}
\quad\text{for }s\in \Gamma_\ell,
$$
with an implied constant that is independent of $z$.
Thus, we have
\begin{equation}\label{eq:mainint}
\begin{aligned}
\frac1{2\pi i}\int_{\Re(s)=\sigma_\ell}&\Lambda_F(s)G(s+\delta_k)(-iz)^{-s}\,ds\\
&=\frac1{2\pi i}\left(\int_{\Gamma_\ell}+\int_{\Re(s)=1-\sigma_\ell}\right)
\Lambda_F(s)G(s+\delta_k)(-iz)^{-s}\,ds\\
&=O_{B,k,\ell}(1)+\frac1{2\pi i}\int_{\Re(s)=1-\sigma_\ell}
\Lambda_F(s)G(s+\delta_k)(-iz)^{-s}\,ds\\
&=O_{B,k,\ell}(1)+\frac1{2\pi i}\int_{\Re(s)=\sigma_\ell}\overline{\Lambda_F(\bar{s})}
G(1-s+\delta_k)(-iz)^{s-1}\,ds,
\end{aligned}
\end{equation}
where the last line follows by the functional equation.

\subsection{Degree $<1$}
Let us first see how to use this to find all elements of $\LL_d^+$ for
$d<1$.  Let notation be as in \eqref{eq:mainint} above.
Then for $s=\sigma_\ell+it$, by the bound $L_F(\bar{s})\ll_\ell 1$ and
Proposition~\ref{prop:acfe}(ii), we have
$$
\overline{\Lambda_F(\bar{s})}\ll_\ell
(1+|t|)^{\frac{d}2(\sigma_\ell-\frac12)+\frac{\Re(\mu)}2}
e^{-\frac{\pi}4d|t|}.
$$
On the other hand, Stirling's formula implies that
$$
G(1-s+\delta_k)\ll_{k,\ell}
(1+|t|)^{(\frac{d}2-1)(\sigma_\ell-\frac12)+k-\frac{\Re(\mu)}2}
e^{-\frac{\pi}4(2-d)|t|},
$$
and together these estimates yield
$$
\overline{\Lambda_F(\bar{s})}G(1-s+\delta_k)(-iz)^{s-1}
\ll_{k,\ell}|z|^{\sigma_\ell-1}(1+|t|)^{(d-1)(\sigma_\ell-\frac12)+k}.
$$
Since $d<1$, taking $\ell$ (and hence $\sigma_\ell$) sufficiently large,
we thus have
$$
\frac1{2\pi i}\int_{\Re(s)=\sigma_\ell}\Lambda_F(s)G(s+\delta_k)(-iz)^{-s}\,ds
\ll_{B,k}1.
$$

By Lemma~\ref{lem:SFkint}, it follows that
$S_F^{(k)}(z)\ll_{B,k,\varepsilon}\Im(z)^{-\varepsilon}$ for any
$k\ge0$. Fixing
$$
B=\{-x+iy:x\in[1,2], y\in(0,1]\},
$$
for any positive integer $n$ and $y\in(0,1]$, we have
$$
(2\pi in)^ka_F(n)e^{-2\pi ny}=\int_{-2}^{-1} S_F^{(k)}(x+iy)e(-nx)\,dx
\ll_{k,\varepsilon}y^{-\varepsilon}.
$$
Taking $y=1/n$, we find $a_F(n)\ll_{k,\varepsilon}n^{-k+\varepsilon}$.
In particular, with $k=1$ we see that
$\sum_{n=1}^\infty|a_F(n)|/\sqrt{n}<\infty$,
and thus Theorem~\ref{thm:multone}(iii) implies that $F=(0,0,0)$.

\subsection{Degree $1$}
Henceforth we assume that $d\ge1$.  Note that we are free to shift the
$L$-function of $F$ by an imaginary displacement; precisely, for any
$t\in\R$ one can see directly from Definition~\ref{def:Ldatum} that there
exists $F_t\in\LL_{d}^+$ with $L$-function $L_{F_t}(s)=L_F(s+it)$.  By the
uniqueness of $\gamma$-factors we have $\gamma_{F_t}(s)=\gamma_F(s+it)$,
and in particular the constant $\mu$ given by Proposition~\ref{prop:acfe}
changes to $\mu_t=\mu+idt$. Hence, replacing $F$ by $F_t$ for a suitable
$t$, we may assume without loss of generality that $\mu\in\R$.

With this convention, after a computation similar to that preceding
\eqref{eq:94contour}, using also the identity
$\Gamma(s)\Gamma(1-s)=\frac{\pi}{\sin(\pi{s})}$, we find that
\begin{equation}\label{eq:secformula}
\overline{\gamma_F(\bar{s})}G(1-s)
=A^{s-\frac12}\frac{\Gamma\bigl((d-1)(s-\frac12)+\frac12\bigr)}
{\cos\frac{\pi}2\bigl((2-d)(s-\frac12)+\mu\bigr)}
\left(\sum_{j=0}^{n-1}\frac{\alpha_j}{s^j}+O_n\bigl(|s|^{-n}\bigr)\right)
\end{equation}
for some constants $A\in\R_{>0}$ and $\alpha_j\in\C$
with $\alpha_0\ne0$, uniformly on
$$
\{s\in\C:\Re(s)\ge\tfrac12\}\cap
\{s\in\C:\Re(s)\ge\tfrac12-\tfrac{\mu}{2-d}
\text{ or }|\Im(s)|\ge1\}.
$$

For $d=1$, we fix any permissible value of $\ell$
and substitute \eqref{eq:secformula} into \eqref{eq:mainint} and
Lemma~\ref{lem:SFkint} with $k=0$, obtaining
$$
S_F(z)=
O_{B,\varepsilon}\bigl(\Im(z)^{-\varepsilon}\bigr)
+\frac{c}{2\pi i}\int_{\Re(s)=\sigma_\ell}
\frac{(-iAz)^{s-1}}{\cos\frac{\pi}2\bigl(s-\frac12+\mu\bigr)}
\overline{L_F(\bar{s})}\bigl(1+O(|s|^{-1})\bigr)\,ds
$$
for some constant $c\in\C^\times$.  Using the estimates
$$
\frac{(-iAz)^{s-1}}{\cos\frac{\pi}2\bigl(s-\frac12+\mu\bigr)}
\ll_B e^{-(\pi-\arg{z})|\Im(s)|}
\quad\text{and}\quad\overline{L_F(\bar{s})}\ll 1
$$
for $\Re(s)=\sigma_\ell$,
we see that the $O(|s|^{-1})$ error term contributes
$\ll_B\log\frac{\pi}{\pi-\arg{z}}\ll_{B,\varepsilon}\Im(z)^{-\varepsilon}$.
As for the main term, recalling that $\sigma_\ell=\frac12+2\ell-\mu$, we
have
\begin{align*}
S_F(z)&=
O_{B,\varepsilon}\bigl(\Im(z)^{-\varepsilon}\bigr)
-c(-1)^\ell\sum_{n=1}^{\infty}\frac{\overline{a_F(n)}}{n}
\frac1{2\pi}\int_\R
\frac{(-iAz/n)^{2\ell-\frac12-\mu+it}}{\cosh\frac{\pi{t}}2}\,dt\\
&=O_{B,\varepsilon}\bigl(\Im(z)^{-\varepsilon}\bigr)
-\frac{2ic(-1)^\ell}{\pi}\sum_{n=1}^{\infty}\frac{\overline{a_F(n)}}{n}
\frac{(-iAz/n)^{2\ell-\frac12-\mu}}{\frac{Az}n-\frac{n}{Az}}.
\end{align*}

We now fix $B=\bigl\{-\frac{\alpha-iy}{A}:\alpha\in[1,N+A], y\in(0,1]\}$
for a large integer $N>0$,
and set $z=-\frac{\alpha-iy}{A}$ in the above. If $\alpha$
is not an integer then we see that the last line above is
$O_{\alpha,N,\varepsilon}(y^{-\varepsilon})$, while if 
$\alpha=n\in\Z_{>0}$ then we get
$$
\frac{ce^{i\frac{\pi}2(\frac12-\mu)}}{\pi{y}}\overline{a_F(n)}
+O_{N,\varepsilon}(y^{-\varepsilon}).
$$
Since $S_F(z)$ is periodic and $a_F(1)=1$, it follows that $A$ is an integer
and $a_F(n)=a_F(n+A)$ for all $n\le N$. Thus, since $N$ is arbitrary,
$a_F(n)$ is periodic with period $A$.

Now, since $L_F(s)$ does not vanish for $\Re(s)>1$, it follows from
\cite[Theorem 4]{sw} that there is a positive integer $q|A$, a primitive
Dirichlet character $\chi\pmod{q}$ and a Dirichlet polynomial $D(s)$
such that $L_F(s)=D(s)L(s,\chi)$. Let $F_\chi\in\LL_1^+$ be the
$L$-datum associated to $\chi$, so that $L_{F-F_\chi}(s)=D(s)$. Then
Theorem~\ref{thm:multone}(iv) implies that $F=F_\chi$, and this
completes the proof for $d=1$.

\subsection{Degree $>1$}
We assume now that $d\in(1,2)$ and follow the method of
Kaczorowski and Perelli \cite{kp53}.
In what follows we write $\kappa=\frac1{d-1}$ and
$$
\sigma_k^*=\inf\left\{\sigma\in\R:
\sum_{n=1}^\infty\frac{|a_F(n)|^k}{n^\sigma}<\infty\right\}
\quad\text{for }k\in\{1,2\}.
$$
Since $F\ne(0,0,0)$, it follows from Theorem~\ref{thm:multone} that
$\sum_{n=1}^\infty|a_F(n)|/\sqrt{n}$ diverges, so
$\sigma_1^*\in[\frac12,1]$. On the other hand,
by the Schwarz inequality, we have
$$
\sum_{n=1}^\infty\frac{|a_F(n)|^2}{n^{2\sigma}}
\le\left(\sum_{n=1}^{\infty}\frac{|a_F(n)|}{n^\sigma}\right)^2
\le\zeta(1+\varepsilon)
\sum_{n=1}^\infty\frac{|a_F(n)|^2}{n^{2\sigma-1-\varepsilon}}
$$
for each $\varepsilon>0$,
and thus $2\sigma_1^*\in[\sigma_2^*,\sigma_2^*+1]$.

Continuing along the same lines as
above, we find the following formula for $S_F^{(k)}(z)$ in this case.
\begin{lemma}\label{lem:SFkdgt1}
Fix a compact interval $I\subseteq(0,\infty)$.
Then for any $k\ge1$, $\alpha\in I$ and $y>0$ sufficiently small,
$$
S_F^{(k)}\!\left(-\frac{\alpha-iy}{A}\right)
=O_{I,k,\varepsilon}\bigl(
y^{\frac12-k-(d-1)(\sigma_1^*-\frac12+\varepsilon)}\bigr)
+\frac{\gamma_k}{y^{k+\frac12}}
\sum_{n=1}^\infty\frac{\overline{a_F(n)}}{\sqrt{n}}
\exp\!\left(i\left(\frac{n}{\alpha}\right)^\kappa\right)
V_k\!\left(\frac{n}{\alpha^{d}y^{1-d}}\right),
$$
where $\gamma_k\in\C^\times$ is a constant and
$V_k(t)=t^{\kappa(k+\frac12)}\exp(-\kappa(t^\kappa-1))$.
\end{lemma}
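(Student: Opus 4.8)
The plan is to mimic the degree-one argument: start from the Mellin representation in Lemma~\ref{lem:SFkint}, push the contour across $\Gamma_\ell$ and apply the functional equation as in \eqref{eq:mainint}, expand the dual $L$-function into its Dirichlet series, and evaluate the resulting integrals term by term. The genuinely new point for $d>1$ is that the $\Gamma$-factor no longer collapses to a reciprocal cosine (take $d=1$ in \eqref{eq:secformula} to see the degenerate case), so the $n$-th integral has an honest saddle point whose location drifts with $y$; extracting its contribution is what manufactures the oscillation $\exp(i(n/\alpha)^\kappa)$ and the weight $V_k$.

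First I would take $B$ to be the image of $I\times(0,y_0]$ under $(\alpha,y)\mapsto-(\alpha-iy)/A$, fix $\ell$ large enough that $\sigma_\ell$ is admissible for the largest shift $\delta_k$ (hence for all $\delta_j$, $0\le j\le k$), put $\sigma=\sigma_\ell$ in Lemma~\ref{lem:SFkint}, and feed each term through \eqref{eq:mainint}. The resulting $O_{B,k,\ell}(1)$ terms are absorbed into the claimed error once one notes $1\ll y^{\frac12-k-(d-1)(\sigma_1^*-\frac12+\varepsilon)}$ for small $y$ (the exponent is $\le-\frac12$ since $k\ge1$ and $\sigma_1^*\ge\frac12$). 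This leaves, for each $j$,
$$
\frac1{2\pi i}\int_{\Re(s)=\sigma_\ell}\overline{\Lambda_F(\bar{s})}\,G(1-s+\delta_j)(-iz)^{s-1}\,ds
=\sum_{n=1}^\infty\overline{a_F(n)}\cdot\frac1{2\pi i}\int_{\Re(s)=\sigma_\ell}\overline{\gamma_F(\bar{s})}\,G(1-s+\delta_j)(-iz)^{s-1}n^{-s}\,ds,
$$
the interchange being valid since $\sigma_\ell>1>\sigma_1^*$. Multiplying \eqref{eq:secformula} by the explicit ratio $G(1-s+\delta_j)/G(1-s)$ — an asymptotic degree-$j$ polynomial in $s$ — expresses $\overline{\gamma_F(\bar{s})}G(1-s+\delta_j)$ as $A^{s-\frac12}$ times $\Gamma\bigl((d-1)(s-\frac12)+\frac12\bigr)/\cos\frac\pi2\bigl((2-d)(s-\frac12)+\mu\bigr)$ times an asymptotic series in $1/s$; after combining the $c_{kj}$ from Lemma~\ref{lem:SFkint} the leading coefficient of that series is a nonzero multiple of $s^k$.

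Next I would evaluate each inner integral via the Mellin--Barnes identity $\frac1{2\pi i}\int\Gamma((d-1)s+\gamma)W^{s}\,ds=\frac1{d-1}W^{-\gamma/(d-1)}e^{-W^{-1/(d-1)}}$ and its $s$-derivatives (to absorb the polynomial factor), after expanding $1/\cos$ into its exponential series and bundling the leading exponential $e^{-\frac{i\pi}2(2-d)s}$ with $A^{s}(-iz)^{s}n^{-s}$ into $W_n^{\,s}$, where $W_n=(y+i\alpha)e^{-i\pi(2-d)/2}/n$ (note $-iAz=y+i\alpha$). This gives for the $n$-th term a main contribution $\propto W_n^{-\kappa(k+1-d/2)}e^{-W_n^{-\kappa}}$. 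Expanding about $y=0$ and using $i^{-\kappa}e^{i\pi(2-d)\kappa/2}=e^{-i\pi/2}$ (valid because $\kappa(1-d)=-1$) yields $W_n^{-\kappa}=-i(n/\alpha)^\kappa+\kappa\,n^{\kappa}\alpha^{-d\kappa}y+O(y^{2})$, so $e^{-W_n^{-\kappa}}=e^{i(n/\alpha)^\kappa}e^{-\kappa t_n^\kappa}\bigl(1+O(y)\bigr)$ with $t_n=n/(\alpha^{d}y^{1-d})$; tracking the remaining elementary factors — the $z^{-k}$ converting $z^kS_F^{(k)}(z)$ back to $S_F^{(k)}(z)$, the $(-iz)^{-1}$, the $A^{-\frac12}$, and the accumulated powers of $n$ and $\alpha$ — one finds the $n$-th main term equals $\gamma_k\,y^{-k-\frac12}\,\overline{a_F(n)}\,n^{-\frac12}\,e^{i(n/\alpha)^\kappa}V_k(t_n)$ exactly, with the constant $e^\kappa$ in $V_k$ absorbed into $\gamma_k$, and $\gamma_k\ne0$ because the leading polynomial coefficient is a nonzero multiple of $c_{kk}$.

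The crux, and the hard part, is the error bookkeeping: showing that everything set aside above — the tail of the $1/s$-expansion, the $m\ge1$ terms of the $1/\cos$ series, the $O(y)$ from the saddle expansion, and the parts of each $s$-integral outside the effective range $|\Im(s)|\asymp y^{-(d-1)}$ — sums over $n$ to $O_{I,k,\varepsilon}\bigl(y^{\frac12-k-(d-1)(\sigma_1^*-\frac12+\varepsilon)}\bigr)$ uniformly for $\alpha\in I$. The structural fact that makes this tractable is that $V_k$ is concentrated on $n\asymp\alpha^{d}y^{-(d-1)}$ and decays like $e^{-\kappa t^\kappa}$ beyond it, so every relevant $n$-sum collapses to $\sum_{n\ll y^{-(d-1)}}|a_F(n)|n^{-1/2}\ll y^{-(d-1)(\sigma_1^*-\frac12+\varepsilon)}$ by partial summation and the definition of $\sigma_1^*$ (with $2\sigma_1^*\in[\sigma_2^*,\sigma_2^*+1]$ available if Cauchy--Schwarz on cross terms is needed). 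Securing the extra factor of $y$ over the main term requires recording how each error source scales in $\Re(s)$ and then sliding the contour down to just past $\Re(s)=\sigma_1^*+\varepsilon$, the edge of absolute convergence of the Dirichlet series; carrying this out uniformly in $\alpha$ is the delicate step.
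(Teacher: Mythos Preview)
Your overall architecture is the paper's: start from Lemma~\ref{lem:SFkint}, apply \eqref{eq:mainint} and the functional equation, insert \eqref{eq:secformula} (multiplied by $G(1-s+\delta_j)/G(1-s)$), evaluate the resulting inverse Mellin transforms exactly, and finally expand in $y$. Your main-term computation is correct, including the identity $\kappa(1-d)=-1$ that makes the phase come out as $i(n/\alpha)^\kappa$.

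The divergence is in the error analysis, and here the paper is both simpler and different in mechanism from what you describe. Two points.

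First, the paper does not expand $1/\cos$ as a series. It uses the one-line identity $\sec s=2e^{-is}-2e^{-is}\sec s$ to replace $\sec\bigl[\tfrac{\pi}{2}((2-d)(s-\tfrac12)+\mu)\bigr]$ by a single exponential; the remainder picks up an extra $e^{-\pi(2-d)|\Im(s)|/2}$ of decay and integrates to $O_{B,k}(1)$ on a fixed contour. Similarly, the tail of the $1/s$-expansion in \eqref{eq:secformula}, once you take enough terms (the paper takes $n=k+2$), is $O(|s|^{-3/2})$ against an integrand that is already barely integrable, so it too gives $O_{B,k}(1)$ on the fixed contour $\Gamma$ (essentially $\Re(s)=1+\varepsilon$ with a detour around poles). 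No contour sliding is involved anywhere.

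Second, and more importantly, your diagnosis of ``where the extra factor of $y$ comes from'' is off. The subleading contributions are not tamed by moving $\Re(s)$ toward $\sigma_1^*+\varepsilon$; rather, after exact Mellin inversion the $j$th term of the expansion produces
\[
\sum_{n\ge1}\frac{|a_F(n)|}{\sqrt{n}}\Bigl(\frac{n}{\alpha}\Bigr)^{\kappa(j+\frac12)}
\exp\!\Bigl(-\kappa\Bigl(\tfrac{n}{\alpha^{d}y^{1-d}}\Bigr)^{\kappa}\Bigr),
\]
and the concentration at $n\asymp\alpha^d y^{1-d}$ that you correctly identified turns each missing power of $(n/\alpha)^\kappa$ (equivalently, each missing power of $s$ in the integrand) directly into one factor of $y$, since $(d-1)\kappa=1$. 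That is the whole mechanism: the paper simply bounds this explicit sum by $O_{I,k,\varepsilon}\bigl(y^{-j-\frac12-(d-1)(\sigma_1^*-\frac12+\varepsilon)}\bigr)$ using partial summation and the definition of $\sigma_1^*$, applies it for every $j\le k-1$, and is done. Your proposed contour shift is neither needed nor, for the terms already evaluated via Mellin inversion, meaningful. The ``parts of each $s$-integral outside the effective range'' likewise do not appear, because the integrals are computed exactly rather than by stationary phase.
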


\begin{proof}
Using \eqref{eq:secformula}, we have
\begin{align*}
\frac1{2\pi i}&\int_{\Re(s)=\sigma_\ell}\overline{\Lambda_F(\bar{s})}
G(1-s+\delta_k)(-iz)^{s-1}\,ds=\\
&\frac1{\sqrt{-iz}}\sum_{n=1}^\infty\frac{\overline{a_F(n)}}{\sqrt{n}}
\frac1{2\pi i}\int_{\Re(s)=\sigma_\ell}
\left(\frac{-iAz}{n}\right)^{s-\frac12}
\frac{\Gamma\bigl((d-1)(s-\frac12)+\frac12\bigr)}
{\cos\bigl[\frac{\pi}2\bigl((2-d)(s-\frac12)+\mu\bigr)\bigr]}\\
&\hspace{5cm}\cdot\frac{G(1-s+\delta_k)}{G(1-s)}
\left(\sum_{j=0}^{n-1}\frac{\alpha_j}{s^j}
+O_n\bigl(|s|^{-n}\bigr)\right)ds.
\end{align*}
Let $\Gamma$ be the boundary curve of
$$
\{s\in\C:\Re(s)\ge1+\varepsilon\}\cap
\{s\in\C:\Re(s)\ge\tfrac12+\max(-\tfrac{\mu}{2-d},\tfrac1{d-1})
\text{ or }|\Im(s)|\ge1\}
$$
for a sufficiently small $\varepsilon>0$.
Choosing $n=k+2$, we have
\begin{align*}
&\left(\frac{-iAz}{n}\right)^{s-\frac12}
\frac{\Gamma\bigl((d-1)(s-\frac12)+\frac12\bigr)}
{\cos\bigl[\frac{\pi}2\bigl((2-d)(s-\frac12)+\mu\bigr)\bigr]}
\frac{G(1-s+\delta_k)}{G(1-s)}
\left(\sum_{j=0}^{n-1}\frac{\alpha_j}{s^j}
+O_n\bigl(|s|^{-n}\bigr)\right)\\
&=O_{B,k}(|s|^{-\frac32})+\sum_{j=-1}^k\beta_{kj}
\left(\frac{-iAz}{n}\right)^{s-\frac12}
\frac{\Gamma\bigl((d-1)(s-\frac12)+j+\frac12\bigr)}
{\cos\bigl[\frac{\pi}2\bigl((2-d)(s-\frac12)+\mu\bigr)\bigr]},
\end{align*}
for some constants $\beta_{kj}\in\C$ with $\beta_{kk}\ne0$,
uniformly for $z\in B$ and $s\in\Gamma$.  Thus,
\begin{align*}
&\frac1{2\pi i}\int_{\Re(s)=\sigma_\ell}\overline{\Lambda_F(\bar{s})}
G(1-s+\delta_k)(-iz)^{s-1}\,ds=\\
&O_{B,k}(1)+\frac1{\sqrt{-iz}}
\sum_{j=-1}^k\beta_{kj}
\sum_{n=1}^\infty\frac{\overline{a_F(n)}}{\sqrt{n}}
\frac1{2\pi i}\int_{\Gamma}
\left(\frac{-iAz}{n}\right)^{s-\frac12}
\frac{\Gamma\bigl((d-1)(s-\frac12)+j+\frac12\bigr)}
{\cos\bigl[\frac{\pi}2\bigl((2-d)(s-\frac12)+\mu\bigr)\bigr]}\,ds.
\end{align*}

Next, from the identity $\sec{s}=2e^{-is}-2e^{-is}\sec{s}$ and
Stirling's formula, we see that we may replace
$1/\cos\bigl[\frac{\pi}2\bigl((2-d)(s-\frac12)+\mu\bigr)\bigr]$
by $2\exp\bigl[-\frac{i\pi}2\bigl((2-d)(s-\frac12)+\mu\bigr)\bigr]$
with an error of $O_{B,k}(1)$, uniformly for $z\in B$. Thus, we get
\begin{equation}\label{eq:gammaint}
\begin{aligned}
O_{B,k}(1)+\frac{2e^{-\frac{i\pi}2\mu}}{\sqrt{-iz}}
\sum_{j=-1}^k\beta_{kj}
\sum_{n=1}^\infty\frac{\overline{a_F(n)}}{\sqrt{n}}
\frac1{2\pi i}\int_{\Gamma}
&\left(\frac{A|z|}{n}\right)^{s-\frac12}
e^{i[\frac{\pi}2(d-1)-(\pi-\arg{z})](s-\frac12)}\\
&\cdot\Gamma\bigl((d-1)(s-\tfrac12)+j+\tfrac12\bigr)\,ds.
\end{aligned}
\end{equation}
Assuming that $\Im(z)$ is small enough that $\pi-\arg{z}<\frac\pi2(d-1)$,
we make the change of variables $s\mapsto\kappa{s}+\frac12$, to get
\begin{align*}
\frac1{2\pi i}\int_{\Gamma}
&\left(\frac{A|z|}{n}\right)^{s-\frac12}
e^{i[\frac{\pi}2(d-1)-(\pi-\arg{z})](s-\frac12)}
\Gamma\bigl((d-1)(s-\tfrac12)+j+\tfrac12\bigr)\,ds\\
&=\frac{\kappa}{2\pi i}\int_{\Re(s)=1}
\left(-i\left(-\frac{n}{Az}\right)^\kappa\right)^{-s}
\Gamma\bigl(s+j+\tfrac12\bigr)\,ds\\
&=\kappa\left(-i\left(-\frac{n}{Az}\right)^\kappa\right)^{j+\frac12}
\exp\!\left(i\left(-\frac{n}{Az}\right)^\kappa\right),
\end{align*}
where all powers are taken with
respect to the principal branch of the logarithm.

Let us now fix
$B=\left\{-\frac{\alpha-iy}{A}:\alpha\in I, y\in(0,\delta]\right\}$
for a sufficiently small $\delta>0$,
and put $z=-\frac{\alpha-iy}{A}$ in the above.
Then
$$
\left(-\frac{n}{Az}\right)^\kappa
=\left(\frac{n}{\alpha}\right)^\kappa
\left(1-\frac{iy}{\alpha}\right)^{-\kappa},
$$
so \eqref{eq:gammaint} becomes
\begin{equation}\label{eq:gammaint2}
\begin{aligned}
O_{I,k}(1)+
\sum_{j=-1}^k
&\frac{2\kappa e^{-\frac{i\pi}2(\mu+j+\frac12)}\beta_{kj}}{\sqrt{-iz}}
\left(1-\frac{iy}{\alpha}\right)^{-\kappa(j+\frac12)}\\
&\cdot\sum_{n=1}^\infty\frac{\overline{a_F(n)}}{\sqrt{n}}
\left(\frac{n}{\alpha}\right)^{\kappa(j+\frac12)}
\exp\!\left(i\left(\frac{n}{\alpha}\right)^\kappa
\left(1-\frac{iy}{\alpha}\right)^{-\kappa}\right).
\end{aligned}
\end{equation}
Since $i\bigl(1-\frac{iy}{\alpha}\bigr)^{-\kappa}
=i-\frac{\kappa{y}}{\alpha}+O_I(y^2)$, for sufficiently small $y$
the $j$th term of \eqref{eq:gammaint2} is bounded above by
$O_{I,k,\varepsilon}\bigl(1+
y^{-j-\frac12-(d-1)(\sigma_1^*-\frac12+\varepsilon)}\bigr)$.

We substitute \eqref{eq:gammaint2} into \eqref{eq:mainint} and
Lemma~\ref{lem:SFkint}, and apply the bound noted above to every term
with $j<k$, obtaining
\begin{align*}
S_F^{(k)}(z)=O_{I,k,\varepsilon}\bigl(
y^{\frac12-k-(d-1)(\sigma_1^*-\frac12+\varepsilon)}\bigr)
&+\frac{\gamma_k e^\kappa}{\alpha^{k+\frac12}}
\left(1-\frac{iy}{\alpha}\right)^{-(\kappa+1)(k+\frac12)}\\
&\cdot\sum_{n=1}^\infty\frac{\overline{a_F(n)}}{\sqrt{n}}
\left(\frac{n}{\alpha}\right)^{\kappa(k+\frac12)}
\exp\!\left(i\left(\frac{n}{\alpha}\right)^\kappa
\left(1-\frac{iy}{\alpha}\right)^{-\kappa}\right)
\end{align*}
for any $k>0$ and some constant $\gamma_k\in\C^\times$.
Finally we replace the factor
$\bigl(1-\frac{iy}{\alpha}\bigr)^{-(\kappa+1)(k+\frac12)}$ by $1$ and
$\exp\!\left(i\left(\frac{n}{\alpha}\right)^\kappa
\bigl(1-\frac{iy}{\alpha}\bigr)^{-\kappa}\right)$
by $\exp\!\left(\left(\frac{n}{\alpha}\right)^\kappa
\bigl(i-\frac{\kappa{y}}{\alpha}\bigr)\right)$,
both of which contribute an error of at most
$O_{I,k,\varepsilon}\bigl(
y^{\frac12-k-(d-1)(\sigma_1^*-\frac12+\varepsilon)}\bigr)$, so that
\begin{align*}
S_F^{(k)}(z)=O_{I,k,\varepsilon}\bigl(
y^{\frac12-k-(d-1)(\sigma_1^*-\frac12+\varepsilon)}\bigr)
+\frac{\gamma_k}{y^{k+\frac12}}
\sum_{n=1}^\infty\frac{\overline{a_F(n)}}{\sqrt{n}}
\exp\!\left(i\left(\frac{n}{\alpha}\right)^\kappa\right)
V_k\!\left(\frac{n}{\alpha^{d}y^{1-d}}\right),
\end{align*}
as desired.
\end{proof}

\begin{lemma}\label{lem:Sigmaformula}
Let $w:\R\to\C$ be a smooth function supported on a compact subinterval of
$(0,\infty)$, and define
$$
\widehat{w}(x)=\int_\R w(t)e(xt)\,dt,
\quad
W(u)=\begin{cases}
u^{\frac\kappa2+\frac1{2d}}(u^{1/d}-1)^{\frac{\kappa-1}2}
w\bigl((u^{1/d}-1)^\kappa\bigr)&\text{if }u>1,\\
0&\text{if }u\le1,
\end{cases}
$$
and
$$
\Sigma(x)=\frac{e^{-\frac{i\pi}4}}{\sqrt{ad}}\sum_{n=1}^{\infty}
\frac{\overline{a_F(n)}}{n^{\frac{\kappa+1}2}}
e\!\left(a\bigl(n^{\frac1{d}}-x^{\frac1{d}}\bigr)^{\kappa d}\right)
W\!\left(\frac{n}{x}\right),
$$
where $a=\frac1{2\pi A^\kappa}$.
Then for all $x>0$ sufficiently large,
\begin{equation}\label{eq:Sigmaformula}
\Sigma(x)=
\frac{\overline{a_F(n(x))}}{\sqrt{n(x)}}
\widehat{w}\bigl(-a\kappa{n(x)}^{\kappa-1}[x-n(x)]\bigr)
+O_{w,\varepsilon}\bigl(x^{\sigma_1^*-\frac12-\kappa+\varepsilon}\bigr),
\end{equation}
where $n(x)=\lfloor{x+\frac12}\rfloor$ is the nearest integer to $x$.
\end{lemma}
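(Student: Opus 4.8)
The plan is to compute a single integral transform of the exponential sum $S_F^{(k)}$ in two different ways: once from the dual expansion furnished by Lemma~\ref{lem:SFkdgt1}, which reproduces $\Sigma(x)$, and once from the absolutely convergent Dirichlet series, which isolates the single coefficient $\overline{a_F(n(x))}$. This is the familiar mechanism behind Voronoi-type identities, and the two evaluations are then equated.

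More precisely, I would fix $k$ large (so that the error term in Lemma~\ref{lem:SFkdgt1} is comfortably negligible), fix a box $B=\{-\tfrac{\alpha-iy}{A}:\alpha\in I,\ y\in(0,\delta]\}$ with $I\subseteq(0,\infty)$ compact, and consider
$$
\mathcal I(x)=\iint_{B} y^{k+\frac12}\,S_F^{(k)}\!\Bigl(-\tfrac{\alpha-iy}{A}\Bigr)\,\Phi(\alpha,y;x)\,\frac{d\alpha\,dy}{y},
$$
where $\Phi$ is an explicit amplitude, concentrated in $y$ at the scale $\asymp x^{-\kappa}$ (so that the argument $n/(\alpha^{d}y^{1-d})$ of $V_k$ in Lemma~\ref{lem:SFkdgt1} is of order $1$ precisely when $n\asymp x$), and carrying an oscillatory factor in $\alpha$ chosen, by a stationary-phase/Legendre-transform matching, to cancel the excess in the phase $\bigl(-n/(Az)\bigr)^{\kappa}$ of the dual sum at its stationary point. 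Substituting the expansion of Lemma~\ref{lem:SFkdgt1} into $\mathcal I(x)$: the error term there, namely $O\bigl(y^{\frac12-k-(d-1)(\sigma_1^{*}-\frac12+\varepsilon)}\bigr)$, integrates — using $y\asymp x^{-\kappa}$ and $\kappa(d-1)=1$ — to $O_{w,\varepsilon}\bigl(x^{\sigma_1^{*}-\frac12-\kappa+\varepsilon}\bigr)$, while stationary phase applied to the main term turns $e^{i(-n/(Az))^{\kappa}}$ into $e\bigl(a(n^{1/d}-x^{1/d})^{\kappa d}\bigr)$, turns the amplitude $V_k$ into a constant multiple of $n^{-(\kappa+1)/2}W(n/x)$, and produces the Gaussian phase $e^{-i\pi/4}$ together with the reciprocal square-root of the Hessian, which is the constant $(ad)^{-1/2}$ once one takes $a=\tfrac1{2\pi A^{\kappa}}$ and fixes the exact shape of $\Phi$. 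Thus $\mathcal I(x)=\Sigma(x)+O_{w,\varepsilon}\bigl(x^{\sigma_1^{*}-\frac12-\kappa+\varepsilon}\bigr)$.

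For the second evaluation I would insert instead the Dirichlet series $S_F^{(k)}(z)=(2\pi i)^{k}\sum_{m}a_F(m)m^{k}e(mz)$, valid for $\Im z>0$, and interchange the sum with the integral over $B$; this is legitimate after truncating at $m\ll x^{\kappa+\varepsilon}$ (permitted by the decay $e^{-2\pi my/A}$) using only $\sum_{m\le X}|a_F(m)|\ll_{\varepsilon}X^{\sigma_1^{*}+\varepsilon}$, which is immediate from the definition of $\sigma_1^{*}$. Term by term one is left with a two-dimensional oscillatory integral whose phase has a nondegenerate stationary point inside $B$ only for $m$ within $O(1)$ of $x$; for the remaining $m$, repeated integration by parts gives a bound $O_N\bigl((1+|m-x|)^{-N}\bigr)$, and summing these against $\sum_{m\le X}|a_F(m)|\ll X^{\sigma_1^{*}+\varepsilon}$ contributes $O_{w,\varepsilon}\bigl(x^{\sigma_1^{*}-\frac12-\kappa+\varepsilon}\bigr)$. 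The one surviving term, $m=n(x)=\lfloor x+\tfrac12\rfloor$, evaluates by stationary phase to $\overline{a_F(n(x))}\,n(x)^{-1/2}\,\widehat w\bigl(-a\kappa\,n(x)^{\kappa-1}[x-n(x)]\bigr)$: the argument of $\widehat w$ is precisely the derivative of the relevant phase at the stationary point times the displacement $x-n(x)$. (Here one works throughout with whichever of $S_F^{(k)}$ and its complex conjugate makes both the dual sum and the diagonal term carry $\overline{a_F}$.) Comparing the two evaluations of $\mathcal I(x)$ yields \eqref{eq:Sigmaformula}.

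The main obstacle is the bookkeeping in the two stationary-phase calculations: one must choose the scale of $y$, the contour $B$, and the amplitude $\Phi$ so that all the explicit quantities — $a=\tfrac1{2\pi A^{\kappa}}$, the phase $e^{-i\pi/4}$, the normalisation $(ad)^{-1/2}$, and the precise arguments of $W$ and of $\widehat w$ — emerge exactly as stated, and so that every remainder (the tails in the stationary-phase expansions, the off-diagonal sum over $m$, and the error term of Lemma~\ref{lem:SFkdgt1}) stays uniformly within $O\bigl(x^{\sigma_1^{*}-\frac12-\kappa+\varepsilon}\bigr)$. A secondary technical point is the interchange of summation and integration, for which only the weak average bound on $a_F$ is available; this is handled by the truncations above together with the rapid decay of $V_k$ away from its bump.
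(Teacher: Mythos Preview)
Your approach---evaluating a two-variable oscillatory integral once via the dual expansion of Lemma~\ref{lem:SFkdgt1} and once via the original Dirichlet series for $S_F^{(k)}$---is plausible in outline but genuinely different from the paper's, and considerably more work. The paper never re-inserts the original series $\sum a_F(m)e(mz)$ at all; instead it exploits the \emph{periodicity} of $S_F'(z)$ under $z\mapsto z-1$. Applying Lemma~\ref{lem:SFkdgt1} with $k=1$ at $\alpha$ and at $\alpha+A$ produces two dual expansions, both in $\overline{a_F(n)}$, which must agree up to the stated error. After the change of variables $x=\alpha^{d}y^{1-d}$, $t=(A/\alpha)^{\kappa}$ and a single \emph{one-dimensional} integration against $e(-atx^{\kappa})w(t)\,dt$, the $\alpha$-side becomes $\sum_n \overline{a_F(n)}\,n^{-1/2}\,\widehat w\bigl(a(n^{\kappa}-x^{\kappa})\bigr)V_1(n/x)$, which the rapid decay of $\widehat w$ already concentrates at $n=n(x)$ with no stationary phase needed; the $(\alpha+A)$-side is handled by one-variable stationary phase in $t$ and yields $\Sigma(x)$ with all the explicit constants. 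What this buys over your route: a single 1D stationary-phase computation instead of two 2D ones; no need to design an amplitude $\Phi$ by hand; and, crucially, no conjugation mismatch, since both sides of the identity come from the dual sum. Your proposal, if completed, would still require an explicit $\Phi$, a resolution of the $a_F$-versus-$\overline{a_F}$ issue you flag but do not settle, and two matched stationary-phase normalizations---all avoided once one notices that periodicity already equates the two sides for free.
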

\begin{proof}
We apply Lemma~\ref{lem:SFkdgt1} with $k=1$.
Since $S_F'(z)$ is periodic, the formula is invariant under
$\alpha\mapsto \alpha+A$, so that
\begin{align*}
&\sum_{n=1}^\infty\frac{\overline{a_F(n)}}{\sqrt{n}}
\exp\!\left(i\left(\frac{n}{\alpha}\right)^\kappa\right)
V_1\!\left(\frac{n}{\alpha^{d}y^{1-d}}\right)
=O_{I,\varepsilon}\bigl(
y^{1-(d-1)(\sigma_1^*-\frac12+\varepsilon)}\bigr)\\
&+\sum_{n=1}^\infty\frac{\overline{a_F(n)}}{\sqrt{n}}
\exp\!\left(i\left(\frac{n}{\alpha+A}\right)^\kappa\right)
V_1\!\left(\frac{n}{(\alpha+A)^{d}y^{1-d}}\right).
\end{align*}
Introducing new parameters $x=\alpha^{d}y^{1-d}$
and $t=(A/\alpha)^\kappa$, this becomes
$$
\sum_{n=1}^\infty\frac{\overline{a_F(n)}}{\sqrt{n}}
e(an^\kappa{t})V_1\!\left(\frac{n}{x}\right)
=O_{I,\varepsilon}\bigl(x^{\sigma_1^*-\frac12-\kappa+\varepsilon}\bigr)
+\sum_{n=1}^\infty\frac{\overline{a_F(n)}}{\sqrt{n}}
e\!\left(\frac{an^\kappa{t}}{(1+t^{d-1})^\kappa}\right)
V_1\!\left(\frac{n}{x(1+t^{d-1})^{d}}\right).
$$

Now let $w:\R\to\C$ be as in the hypotheses, and
fix the interval $I$ in Lemma~\ref{lem:SFkdgt1} so that
it contains $At^{1-d}$ for every $t$ in the support of $w$.
We multiply both sides of the above by
$e(-at x^\kappa)w(t)$ and integrate to get
\begin{equation}\label{eq:mainid}
\begin{aligned}
&\sum_{n=1}^\infty\frac{\overline{a_F(n)}}{\sqrt{n}}
\widehat{w}\bigl(a(n^\kappa-x^\kappa)\bigr)V_1\!\left(\frac{n}{x}\right)
=O_{w,\varepsilon}\bigl(x^{\sigma_1^*-\frac12-\kappa+\varepsilon}\bigr)\\
&+\sum_{n=1}^\infty\frac{\overline{a_F(n)}}{\sqrt{n}}
\int_0^\infty e\bigl(x^\kappa\varphi(t,n/x)\bigr)\psi(t,n/x)\,dt,
\end{aligned}
\end{equation}
where
$$
\varphi(t,u)
=at\left[\left(\frac{u}{1+t^{d-1}}\right)^\kappa-1\right]
\quad\text{and}\quad
\psi(t,u)=w(t)V_1\!\left(\frac{u}{(1+t^{d-1})^{d}}\right).
$$

Suppose that $w$ is supported on $[t_1,t_2]\subseteq(0,\infty)$,
and set $\lambda_1=\frac12\bigl(1+(1+t_1^{d-1})^d\bigr)$,
$\lambda_2=2(1+t_2^{d-1})^d$.
To treat the right-hand side of \eqref{eq:mainid}, we split the sum over
the three ranges $n\le\lambda_1x$, $\lambda_1x<n<\lambda_2x$ and
$n\ge\lambda_2x$.  We compute that
$$
\frac{\partial\varphi}{\partial{t}}(t,u)
=a\left[\frac{u^\kappa}{(1+t^{d-1})^{\kappa{d}}}-1\right]
\quad\text{and}\quad
\frac{\partial^2\varphi}{\partial{t}^2}(t,u)
=-\frac{adu^{\kappa}t^{d-2}}
{\bigl(1+t^{d-1}\bigr)^{\kappa d+1}},
$$
so that $\frac{\partial\varphi}{\partial{t}}$
vanishes only when $u=(1+t^{d-1})^{d}$.
Hence, for $u\le\lambda_1$ or $u\ge\lambda_2$,
$\frac{\partial\varphi}{\partial{t}}$ does not vanish
for $t\in[t_1,t_2]$, and in fact we have
$|\frac{\partial\varphi}{\partial{t}}|\gg_w1$
uniformly for $t\in[t_1,t_2]$,
$u\notin(\lambda_1,\lambda_2)$.  Further,
$\frac{\partial^2\varphi}{\partial{t}^2}$ never vanishes, so
by van der Corput's lemma
\cite[Chapter VIII, Corollary of Proposition 2]{stein}, we have
$$
\int_0^\infty e\bigl(x^{\kappa}\varphi(t,u)\bigr)\psi(t,u)\,dt
\ll_w x^{-\kappa},
$$
uniformly for $u\le\lambda_1$ or $u\ge\lambda_2$.
Thus, the ranges $n\le\lambda_1x$ and $n\ge\lambda_2x$ contribute
$O_{w,\varepsilon}\bigl(x^{\sigma_1^*-\frac12-\kappa+\varepsilon}\bigr)$.

For the remaining range we apply the method of stationary phase
\cite[Lemma 2.8]{tao}.  Note in particular that
$$
\frac{\partial^2\varphi}{\partial{t}^2}(t_0(u),u)=
-\frac{ad}{u^{1/d}(u^{1/d}-1)^{\kappa-1}}
\quad\text{and}\quad
\varphi(t_0(u),u) = a\bigl(u^{1/d}-1\bigr)^{\kappa d},
$$
where $t_0(u)=(u^{1/d}-1)^\kappa$ is the stationary point of
$\varphi(t,u)$. Since $u=n/x$ varies within a compact subset of
$(1,\infty)$, we find that
$$
\int_0^\infty e\bigl(x^{\kappa}\varphi(t,u)\bigr)\psi(t,u)\,dt
=\frac{e^{-\frac{i\pi}4}}{\sqrt{ad}}n^{-\frac{\kappa}2}
e\!\left(a\bigl(n^{1/d}-x^{1/d}\bigr)^{\kappa d}\right)
W\!\left(\frac{n}{x}\right)
+O_w\bigl(x^{-\frac32\kappa}\bigr),
$$
with $W$ as defined in statement of the lemma, and
with an implied constant that is independent of $n$.
The error term contributes a total of at most
$O_{w,\varepsilon}(x^{\sigma_1^*-\frac12-\frac32\kappa+\varepsilon})$,
so altogether the right-hand side of \eqref{eq:mainid} is
$$
O_{w,\varepsilon}\bigl(x^{\sigma_1^*-\frac12-\kappa+\varepsilon}\bigr)
+\frac{e^{-\frac{i\pi}4}}{\sqrt{ad}}
\sum_{n=1}^\infty\frac{\overline{a_F(n)}}{n^{\frac{\kappa+1}2}}
e\!\left(a\bigl(n^{1/d}-x^{1/d}\bigr)^{\kappa d}\right)
W\!\left(\frac{n}{x}\right).
$$

Finally, we note that the left-hand side of \eqref{eq:mainid} is essentially
concentrated at integers. Precisely,
for any $n\ne n(x)=\lfloor{x+\frac12}\rfloor$, we have
$|n^\kappa-x^\kappa|\gg x^{\kappa-1}$. Since $\widehat{w}$ has very
rapid decay, these terms contribute $O_N(x^{-N})$ to the sum.
On the other hand, for $n=n(x)$, writing $\{x\}=x-n(x)$, we have
$$
n^\kappa-x^\kappa=n^\kappa-(n+\{x\})^\kappa
=-\kappa n^{\kappa-1}\{x\}+O\bigl(x^{\kappa-2}\{x\}^2\bigr),
$$
and it follows that
$$
\frac{\overline{a_F(n)}}{\sqrt{n}}
\widehat{w}\bigl(a(n^\kappa-x^\kappa)\bigr)V_1\!\left(\frac{n}{x}\right)
=\frac{\overline{a_F(n)}}{\sqrt{n}}\bigl(
\widehat{w}\bigl(-a\kappa n^{\kappa-1}\{x\}\bigr)
+O_\varepsilon(x^{-\kappa+\varepsilon})\bigr).
$$
Noting that $a_F(n)\ll_\varepsilon n^{\frac12+\varepsilon}$,
this concludes the proof.
\end{proof}

To apply Lemma~\ref{lem:Sigmaformula}, we fix a function
$w_0:\R\to\R$ which is smooth, even, non-negative, supported on
$[-\frac12,\frac12]$ and $L^2$-normalized, and set
$w(t)=w_0(t-\frac32)$. Since the corresponding $W$ is supported
away from $0$, from the definition of $\Sigma(x)$ we get
$$
|\Sigma(x)|\le\sum_{n=1}^\infty\frac{|a_F(n)|}{n^{\frac{\kappa+1}2}}
\left|W\!\left(\frac{n}{x}\right)\right|
\ll_\varepsilon x^{\sigma_1^*-\frac{\kappa+1}2+\varepsilon}.
$$
Setting $x=n$, we learn from \eqref{eq:Sigmaformula} that
$a_F(n)\ll_\varepsilon n^{\sigma_1^*-\frac{\kappa}2+\varepsilon}$.
However, if $\kappa>2$, this contradicts the definition of $\sigma_1^*$,
and thus $\LL_d^+=\emptyset$ for $d\in(1,\frac32)$.

The idea of \cite{kp53} for going beyond this is to exploit
the fact that the right-hand side of \eqref{eq:Sigmaformula}
is concentrated at integers, so that the integral
$J(X)=\int_X^{2X}|\Sigma(x)|^2e(x)\,dx$ behaves like
$\int_X^{2X}|\Sigma(x)|^2\,dx$. Since the series
defining $\Sigma(x)$ displays no such behavior, we will see that $J(X)$
is small, and this results in a contradiction for $d<\frac53$.
We assume henceforth that $d\in[\frac32,2)$, so that $\kappa\in(1,2]$.

Proceeding, we expand the square on the right-hand side of
\eqref{eq:Sigmaformula}.
The main term is
\begin{align*}
\int_X^{2X}&\frac{|a_F(n(x))|^2}{n(x)}
\bigl|\widehat{w}\bigl(-a\kappa{n(x)}^{\kappa-1}[x-n(x)]\bigr)\bigr|^2
e(x)\,dx\\
&=\sum_{X-\frac12\le n\le 2X+\frac12}\frac{|a_F(n)|^2}{n}
\int_{[n-\frac12,n+\frac12)\cap[X,2X]}
\bigl|\widehat{w}\bigl(-a\kappa{n}^{\kappa-1}(x-n)\bigr)\bigr|^2
e(x)\,dx.
\end{align*}
For the boundary terms with $n$ near $X$ or $2X$, we use the above estimate
$$
|a_F(n)|^2\ll_\varepsilon n^{2\sigma_1^*-\kappa+\varepsilon}
\le n^{\sigma_2^*+1-\kappa+\varepsilon}
$$
to see that they contribute at most
$$
\frac{|a_F(n)|^2}{n}\int_\R
\bigl|\widehat{w}\bigl(-a\kappa{n}^{\kappa-1}x\bigr)\bigr|^2\,dx
\ll_\varepsilon X^{\sigma_2^*+1-2\kappa+\varepsilon}.
$$
For the other terms we translate the integral by $n$ and extend it to
$\R$, which introduces an
error of only $O_N(X^{-N})$ thanks to the rapid decay of $\widehat{w}$.
Thus, in total the main term is
\begin{align*}
&O_\varepsilon\bigl(X^{\sigma_2^*+1-2\kappa+\varepsilon}\bigr)
+\frac1{a\kappa}\sum_{X\le n\le 2X}\frac{|a_F(n)|^2}{n^\kappa}
w_0\ast w_0\!\left(\frac1{a\kappa n^{\kappa-1}}\right)\\
&=O_\varepsilon\bigl(X^{\sigma_2^*+1-2\kappa+\varepsilon}\bigr)
+\frac1{a\kappa}\sum_{X\le n\le 2X}\frac{|a_F(n)|^2}{n^\kappa},
\end{align*}
where we have used that $w_0\ast w_0(t)=1+O(|t|)$.
Next, the error terms in \eqref{eq:Sigmaformula} contribute
\begin{align*}
&\ll_\varepsilon X^{2\sigma_1^*-2\kappa+\varepsilon}
+ X^{\sigma_1^*-\frac12-\kappa+\varepsilon}
\int_X^{2X}
\left|\frac{\overline{a_F(n(x))}}{\sqrt{n(x)}}
\widehat{w}\bigl(-a\kappa{n(x)}^{\kappa-1}[x-n(x)]\bigr)\right|\,dx\\
&\ll_\varepsilon X^{2\sigma_1^*-2\kappa+\varepsilon}
\le X^{\sigma_2^*+1-2\kappa+\varepsilon},
\end{align*}
so altogether we have
\begin{equation}\label{eq:Jformula}
J(X)=\frac1{a\kappa}\sum_{X\le n\le 2X}\frac{|a_F(n)|^2}{n^\kappa}
+O_\varepsilon\bigl(X^{\sigma_2^*+1-2\kappa+\varepsilon}\bigr).
\end{equation}

Now, for any fixed $\varepsilon>0$, since
$\sum_{n=1}^{\infty}|a_F(n)|^2n^{-\sigma_2^*+\frac{\varepsilon}2}$
diverges, there are arbitrarily large values of $X$ such that
$\sum_{X\le n\le 2X}|a_F(n)|^2n^{-\sigma_2^*+\frac{\varepsilon}2}
\ge\frac1{\log{X}}$,
and for these $X$ we have
$$
\sum_{X\le n\le 2X}\frac{|a_F(n)|^2}{n^\kappa}
\gg_\varepsilon X^{\sigma_2^*-\kappa-\frac{\varepsilon}2}
\sum_{X\le n\le 2X}\frac{|a_F(n)|^2}{n^{\sigma_2^*-\frac{\varepsilon}2}}
\gg_\varepsilon X^{\sigma_2^*-\kappa-\varepsilon}.
$$
Hence, since $\kappa>1$, \eqref{eq:Jformula} implies that
there are arbitrarily large $X$ for which
\begin{equation}\label{eq:Jlowerbound}
|J(X)|\gg_\varepsilon X^{\sigma_2^*-\kappa-\varepsilon}.
\end{equation}

Next we evaluate $J(X)$ using the definition of $\Sigma(x)$. To that
end, since our chosen $w$ is real valued, we have
$$
|\Sigma(x)|^2
=\frac1{ad}\sum_{m,n\ge1}
\frac{a_F(m)\overline{a_F(n)}}{(mn)^{\frac{\kappa+1}2}}
e\!\left(a\bigl(n^{\frac1{d}}-x^{\frac1{d}}\bigr)^{\kappa d}
-a\bigl(m^{\frac1{d}}-x^{\frac1{d}}\bigr)^{\kappa d}\right)
W\!\left(\frac{m}{x}\right)W\!\left(\frac{n}{x}\right).
$$
We are thus faced with the integral
$$
J_{m,n}(X)=
\int_X^{2X}e\bigl(f(x,n)-f(x,m)+x\bigr)
W\!\left(\frac{m}{x}\right)W\!\left(\frac{n}{x}\right)dx,
$$
where $f(x,n)=a\bigl(n^{\frac1{d}}-x^{\frac1{d}}\bigr)^{\kappa d}$, to
which we apply van der Corput's method
\cite[Chapter VIII, Corollary of Proposition 2]{stein}.

First note that
$$
\frac{\partial}{\partial{x}}\bigl(f(x,n)-f(x,m)\bigr)
=-a\kappa x^{\kappa-1}\bigl(t_0(n/x)-t_0(m/x)\bigr),
$$
where $t_0(u)=(u^{1/d}-1)^\kappa$. For $u$ in the support of $W$, we
have
$t_0'(u)=\frac{\kappa}{d}(u^{1/d}-1)^{\kappa-1}u^{\frac1d-1}\asymp 1$,
so by the mean value theorem, there are positive
constants $c_1$ and $c_2$ such that
$$
c_1x^{\kappa-2}\le
\frac{\frac{\partial}{\partial{x}}\bigl(f(x,n)-f(x,m)\bigr)}{m-n}
\le c_2x^{\kappa-2}
$$
for all $m\ne n$ such that $W(m/x)W(n/x)\ne0$ for some $x\in[X,2X]$.

Put
$I_X=\bigl[\frac{X^{2-\kappa}}{2c_2},\frac{2(2X)^{2-\kappa}}{c_1}\bigr]$.
Then for $0\ne n-m\notin I_X$, it follows that
$$
\left|\frac{\partial}{\partial{x}}\bigl(f(x,n)-f(x,m)+x\bigr)\right|
\ge\frac{c_1}2x^{\kappa-2}|m-n|\gg X^{\kappa-2}|m-n|.
$$
Further, we compute that
\begin{equation}\label{eq:2ndderiv}
\begin{aligned}
\frac{\partial^2}{\partial{x}^2}\bigl(f(x,n)-f(x,m)+x\bigr)
&=a\kappa x^{\kappa-2}
\bigl[ut_0'(u)-(\kappa-1)t_0(u)\bigr]\Bigr|_{m/x}^{n/x}\\
&=ax^{\kappa-2}
\bigl(u^{1/d}-1\bigr)^{\kappa-1}
\bigl[\tfrac1d u^{1/d}+\kappa(\kappa-1)\bigr]\Bigr|_{m/x}^{n/x}.
\end{aligned}
\end{equation}
Since
$\bigl(u^{1/d}-1\bigr)^{\kappa-1}
\bigl[\tfrac1d u^{1/d}+\kappa(\kappa-1)\bigr]$
is an increasing function, the last line never vanishes
for $m\ne n$, so
$\frac{\partial}{\partial{x}}\bigl(f(x,n)-f(x,m)+x\bigr)$ is monotonic.
Thus, van der Corput's lemma for the first derivative yields the
estimate
$$
J_{m,n}(X)\ll\frac{X^{2-\kappa}}{|m-n|}
$$
for those terms.  Similarly, for the diagonal terms $m=n$, we get
$$
J_{m,n}(X)=\int_X^{2X}W\!\left(\frac{n}{x}\right)^2 e(x)\,dx\ll 1.
$$
It remains only to handle the terms with
$n-m\in I_X$, to which we apply van der
Corput's lemma for the second derivative.
From \eqref{eq:2ndderiv} and the mean value theorem, we see that
$$
\left|\frac{\partial^2}{\partial{x}^2}\bigl(f(x,n)-f(x,m)+x\bigr)\right|
\gg x^{\kappa-3}|m-n| \gg X^{-1},
$$
and thus $J_{m,n}(X)\ll\sqrt{X}$.

Suppose that $W$ is supported on $[u_1,u_2]\subseteq(0,\infty)$.
Substituting the above estimates into the definition of $J(X)$, we have
$$
\begin{aligned}
J(X)\ll \sum_{u_1X\le n\le2u_2 X}
\frac{|a_F(n)|^2}{n^{\kappa+1}}
&+\sqrt{X}\sum_{\substack{u_1X\le m,n\le2u_2X\\
n-m\in I_X}}\frac{|a_F(n)a_F(m)|}{(mn)^{\frac{\kappa+1}2}}\\
&+X^{2-\kappa}\sum_{\substack{u_1X\le m,n\le2u_2X\\
n-m\notin I_X\cup\{0\}}}\frac{|a_F(n)a_F(m)|}{|m-n|(mn)^{\frac{\kappa+1}2}}.
\end{aligned}
$$
Using the inequality $|a_F(n)a_F(m)|\le\frac12(|a_F(n)|^2+|a_F(m)|^2)$
together with the estimates $\#(I_X\cap\Z)\ll X^{2-\kappa}$ and
$\sum_{1\le n\le2u_2X}\frac1n\ll\log{X}\ll_\varepsilon X^\varepsilon$,
we see that this is
$$
\ll_\varepsilon X^{\sigma_2^*-1-\kappa+\varepsilon}
+ X^{\sigma_2^*+\frac32-2\kappa+\varepsilon}
+ X^{\sigma_2^*+1-2\kappa+\varepsilon}
\ll X^{\sigma_2^*+\frac32-2\kappa+\varepsilon}.
$$
Putting this together with the lower bound \eqref{eq:Jlowerbound},
we must have $\sigma_2^*-\kappa-\varepsilon\le
\sigma_2^*+\frac32-2\kappa+\varepsilon$
for all $\varepsilon>0$, and thus $\kappa\le\frac32$.
Hence, $\LL_d^+=\emptyset$ for $d\in[\frac32,\frac53)$, and this
concludes the proof.

\bibliographystyle{amsplain}
\bibliography{selberg}
\end{document}